\newcommand{\Z}{\mathbb{Z}}
\newcommand{\N}{\mathbb{N}}
\newcommand{\C}{\mathbb{C}}
\newcommand{\eps}{\epsilon}
\theoremstyle{Case1}
\theoremstyle{Case2}
\newcommand{\NN}{\mathbb{N}}
\newcommand{\cstu}{\mathrm{C}^*_u}
\newcommand{\csts}{\mathrm{C}^*_s}
\newcommand{\cst}{\mathrm{C}^*}
\newtheorem*{rigprob*}{Rigidity Problem for Uniform Roe Algebras}
\newtheorem*{rigprobcorona*}{Rigidity Problem for Uniform Roe Coronas}
\newcommand{\cstr}{\mathrm{C}^*}
\newcommand{\cstar}{$\mathrm{C}^*$}
\newcommand{\cB}{\mathcal{B}}
\newcommand{\cK}{\mathcal{K}}
\newtheorem{theorem}{Theorem}[section]
\newtheorem*{theorem*}{Theorem}
\newtheorem{proposition}[theorem]{Proposition}
\newtheorem{problem}[theorem]{Problem}
\newtheorem*{proposition*}{Proposition}
\newtheorem{lemma}[theorem]{Lemma}
\newtheorem*{lemma*}{Lemma}
\newtheorem{corollary}[theorem]{Corollary}
\newtheorem*{corollary*}{Corollar}
\newtheorem*{fact*}{Fact}
\theoremstyle{definition}
\newtheorem{definition}[theorem]{Definition}
\newtheorem*{definition*}{Definition}
\newtheorem{claim}[theorem]{Claim}
\newtheorem*{claim*}{Claim}
\newtheorem*{conjecture*}{Conjecture}
\newtheorem{question}[theorem]{Question}
\newtheorem*{acknowledgments}{Acknowledgments}
\theoremstyle{remark}
\newtheorem*{example*}{Example}
\newtheorem{remark}[theorem]{Remark}
\newtheorem*{remark*}{Remark}
\newtheorem*{note*}{Note}
\newtheorem*{question*}{Question}
\DeclareMathOperator{\Sp}{Sp}
\DeclareMathOperator{\supp}{supp}
\DeclareMathOperator{\propg}{prop}
\DeclareMathOperator{\rank}{rank}
\DeclareMathOperator{\Ad}{Ad}
\newcounter{my_enumerate_counter}
\newcommand{\pushcounter}{\setcounter{my_enumerate_counter}{\value{enumi}}}
\newcommand{\popcounter}{\setcounter{enumi}{\value{my_enumerate_counter}}}
\begin{document}

\title[Coarse Baum-Connes conjecture and rigidity]{Coarse Baum-Connes conjecture and rigidity for Roe algebras}
%\author{Bruno M. Braga, Yeong Chyuan Chung \and Kang Li}

\thanks{Braga is supported by the Simons Foundations. Chung and Li are supported by the European Research Council (ERC-677120).}

\author{Bruno M. Braga}
\address[B. M. Braga]{Department of Mathematics and Statistics,
York University,
4700 Keele Street,
Toronto, Ontario, Canada, M3J
1P3}
\email{demendoncabraga@gmail.com}
\urladdr{https://sites.google.com/site/demendoncabraga}

\author{Yeong Chyuan Chung}
\address[ Y. C.  Chung]{Institute of Mathematics of the Polish Academy of Sciences, \'{S}niadeckich 8, 00-656 Warsaw, Poland}
\email{ychung@impan.pl}
\urladdr{https://sites.google.com/view/ycchung/home}
\author{Kang Li}
\address[K. Li]{Institute of Mathematics of the Polish Academy of Sciences, \'{S}niadeckich 8, 00-656 Warsaw, Poland}
\email{kli@impan.pl}
\urladdr{https://sites.google.com/site/kanglishomepage}

\subjclass[2010]{}
\keywords{}
\thanks{}
\date{\today}%
\maketitle

\begin{abstract}
In this paper, we connect the rigidity problem and the coarse Baum-Connes conjecture for Roe algebras. In particular, we show that if $X$ and $Y$ are two uniformly locally finite metric spaces such that their Roe algebras are $*$-isomorphic, then $X$ and $Y$ are coarsely equivalent provided either $X$ or $Y$ satisfies the coarse Baum-Connes conjecture with coefficients. It is well-known that coarse embeddability into a Hilbert space implies the coarse Baum-Connes conjecture with coefficients. On the other hand, we provide a new example of a finitely generated group satisfying the coarse Baum-Connes conjecture with coefficients but which does not coarsely embed into a Hilbert space. 
\end{abstract}

\section{Introduction}

Given a metric space $(X,d)$, one defines  the uniform Roe algebra of $X$, denoted by $\cstu(X)$, as the closure of all bounded operators on $\ell_2(X)$ with finite propagation (we refer the reader to Section \ref{SectionBackground} for all the definitions in this introduction). Similarly, the Roe algebra of $X$, denoted by $\cstr(X)$, is defined as
the closure of all locally compact bounded operators on $\ell_2(X,H_X)$ with finite propagation, where $H_X$ is an infinite dimensional   separable Hilbert space. Recently, the study of rigidity properties for those \cstar-algebras has gained a lot of attention (\cite{SpakulaWillett2013AdvMath,WhiteWillett2017,BragaFarah2018,BragaFarahVignati2018,BragaFarahVignati2019,BragaVignati2019}).
Precisely, the following is open.

\begin{problem}[Rigidity Problem]\label{ProblemRigidity}
Let $X$ and $Y$ be uniformly locally finite metric spaces.
\begin{enumerate}
\item\label{ProblemRigidityUnifRoeAlg}If $\cstu(X)$ and $\cstu(Y)$ are $*$-isomorphic, are $X$ and $Y$ coarsely equivalent?
\item\label{ProblemRigidityRoeAlg} If $\cstr(X)$ and $\cstr(Y)$ are $*$-isomorphic, are $X$ and $Y$ coarsely equivalent?
\end{enumerate}
\end{problem}

The first positive partial result for Problem \ref{ProblemRigidity} was proved by J. \v{S}pakula and R. Willett, who showed that both items above have a positive answer if the spaces have property A (\cite[Theorem 4.1]{SpakulaWillett2013AdvMath}). Later, Problem \ref{ProblemRigidity}\eqref{ProblemRigidityUnifRoeAlg} was answered positively for the larger class of spaces which coarsely embed into a Hilbert space (\cite[Corollary 1.2]{BragaFarah2018}), and even more recently it was shown that one only needs to assume that one of the spaces coarsely embeds into a Hilbert space (\cite[Corollary 1.5]{BragaFarahVignati2019}). This was done by looking at a technical condition on the metric spaces (Definition \ref{DefiMainGeomProp}\eqref{ItemDefiMainGeomProp}), and noticing that coarse embeddability into a Hilbert space implies that condition. However, after the work of J. \v{S}pakula and R. Willett, the Roe algebra has been neglected in the papers mentioned above, i.e., those subsequent articles only dealt with the rigidity question  for uniform Roe algebras.

The goal of this paper is two-fold. Firstly, we generalize the main results of \cite{BragaFarah2018,BragaFarahVignati2019} in order to obtain partial answers to Problem \ref{ProblemRigidity}\eqref{ProblemRigidityRoeAlg} outside of the realm of property A. Secondly, we further study the technical geometric condition introduced in \cite{BragaFarah2018,BragaFarahVignati2018} -- i.e, the property that all sparse subspaces of a given metric space $X$ yield only compact ghost projections in their (uniform) Roe algebras (see Definition \ref{DefiMainGeomProp}) -- in order to extend the class of uniformly locally finite metric spaces satisfying this property.

We now describe our main results. Firstly, let us define the main technical  geometric condition considered in these notes. For the definition of a ghost operator in $\cstr(X)$, we refer to Definition \ref{DefiGhost} below.

\begin{definition}\label{DefiMainGeomProp}
Let   $(X,d)$ be a metric space. 
\begin{enumerate}
\item $X$ is called \emph{sparse} if there exists a partition $X=\bigsqcup_nX_n$ such that
\begin{itemize}
\item $|X_n|<\infty$ for all $n\in\N$, and
\item $d(X_n,X_m)\to \infty$ as $n+m\to \infty$.
\end{itemize}
\item \label{ItemDefiMainGeomProp} We say that \emph{all sparse subspaces of $X$ yield only compact ghost projections in their Roe algebras} if for all sparse subspaces $X'\subset X$ all ghost projections in $\cstr(X') $ are compact.
\end{enumerate}
\end{definition}

Notice  that the geometric condition in \eqref{ItemDefiMainGeomProp}   was already  known to be formally  weaker than coarse embeddability into a Hilbert space (see \cite[Lemma 7.3]{BragaFarah2018}). However, as we prove below, this is actually a strictly weaker property  -- see Theorem \ref{ThmCBCwithCoefImplyMainGeomProp} and Theorem \ref{ThmPropWhichImplyMainGeomProp}.
 
Following the terminology introduced in \cite{BragaFarah2018} for the uniform Roe algebra, some of our results below depend on \emph{rigid $*$-isomorphisms} between different kinds of Roe algebras (see Definition \ref{DefiRigid} for the precise definition). The next theorem summarizes the state of the art of Problem \ref{ProblemRigidity}\eqref{ProblemRigidityRoeAlg}.

\begin{theorem}\label{ThmMainEquivalences} 
Let $X$ and $Y$ be uniformly locally finite metric spaces. Then the following are equivalent.\footnote{Definition \ref{DefiAlgebras} below gives the precise definitions of the Roe algebra, uniform Roe algebra, stable Roe algebra, and uniform algebra -- $\cstr(X)$, $\cstu(X)$, $\csts(X)$, and $\mathrm{UC}^*(X)$, respectively.}
\begin{enumerate}
\item\label{ItemThmMainEquivalences.CE} $X$ is coarsely equivalent to $Y$.
\item\label{ItemThmMainEquivalences.StIsoR} $\csts(X)$ is rigidly $*$-isomorphic to $\csts(Y)$.
\item\label{ItemThmMainEquivalences.UCR} $\mathrm{UC}^*(X)$ is rigidly $*$-isomorphic to $\mathrm{UC}^*(X)$.
\item\label{ItemThmMainEquivalences.RoeR} $\cstr(X)$ is rigidly $*$-isomorphic to $\cstr(Y)$.
\end{enumerate}
If all sparse subspaces of $Y$ yield only compact ghost projections in their Roe algebras, then the items above are also equivalent to the following.
\begin{enumerate}\setcounter{enumi}{4}
\item\label{ItemThmMainEquivalences.Morita} $\cstu(X)$ is Morita equivalent to $\cstu(Y)$.
\item\label{ItemThmMainEquivalences.StIso} $\csts(X)$ is $*$-isomorphic to $\csts(Y)$.
\item\label{ItemThmMainEquivalences.UC} $\mathrm{UC}^*(X)$ is $*$-isomorphic to $\mathrm{UC}^*(Y)$.
\item\label{ItemThmMainEquivalences.Roe} $\cstr(X)$ is $*$-isomorphic to $\cstr(Y)$.
\end{enumerate}
\end{theorem}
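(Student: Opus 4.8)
The plan is to first establish the equivalences among \eqref{ItemThmMainEquivalences.CE}--\eqref{ItemThmMainEquivalences.RoeR}, which should hold in full generality (no assumption on sparse subspaces), and then to close the loop back to \eqref{ItemThmMainEquivalences.CE} from the weaker hypotheses \eqref{ItemThmMainEquivalences.Morita}--\eqref{ItemThmMainEquivalences.Roe} under the ghost-projection assumption on $Y$. For the first block, I would argue in a cycle. The implication \eqref{ItemThmMainEquivalences.CE}$\Rightarrow$(all the rest) is the ``easy'' direction: a coarse equivalence $f\colon X\to Y$ induces, by the standard functoriality of Roe-type algebras, a rigid $*$-isomorphism between the corresponding algebras $\csts(X)\cong\csts(Y)$, $\mathrm{UC}^*(X)\cong\mathrm{UC}^*(Y)$, and $\cstr(X)\cong\cstr(Y)$ — here one uses that a coarse equivalence between uniformly locally finite spaces can be implemented by a partial isometry conjugation after passing to $\ell_2(X)\otimes\ell_2(\N)$, and that the induced map visibly preserves the supports/propagation, hence is rigid in the sense of Definition \ref{DefiRigid}. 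The reverse implications \eqref{ItemThmMainEquivalences.StIsoR}$\Rightarrow$\eqref{ItemThmMainEquivalences.CE}, \eqref{ItemThmMainEquivalences.UCR}$\Rightarrow$\eqref{ItemThmMainEquivalences.CE}, \eqref{ItemThmMainEquivalences.RoeR}$\Rightarrow$\eqref{ItemThmMainEquivalences.CE} are where the real work is: one must reconstruct the coarse structure of $X$ from a rigid $*$-isomorphism. This follows the \v{S}pakula--Willett / Braga--Farah strategy — a rigid isomorphism moves the canonical ``spatial'' masses (the rank-one projections $e_{xx}$, or their images in the stable/uniform-algebra setting) to operators concentrated near single points, and tracking how finite-propagation ``partial translations'' are moved yields a coarse map $X\to Y$ and a coarse inverse; rigidity is exactly what guarantees these maps are coarse (bounded propagation control) rather than merely measurable.

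For the second block, assume now that all sparse subspaces of $Y$ yield only compact ghost projections in their Roe algebras. The point is to \emph{upgrade} an arbitrary (possibly non-rigid) $*$-isomorphism, or a Morita equivalence, to a rigid one, so that the first block applies. I would show \eqref{ItemThmMainEquivalences.Roe}$\Rightarrow$\eqref{ItemThmMainEquivalences.RoeR} (and the analogous statements for $\csts$, $\mathrm{UC}^*$, and Morita equivalence of $\cstu$) by the ghost-projection argument of \cite{BragaFarah2018,BragaFarahVignati2019}: given a $*$-isomorphism $\Phi$, one restricts attention to sparse subspaces, observes that $\Phi$ sends the canonical diagonal projections to operators that are ghost modulo a rigid perturbation, and the hypothesis forces those ghost parts to be compact — which is precisely enough to conclude $\Phi$ is a rigid isomorphism after a compact correction. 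The Morita-equivalence case \eqref{ItemThmMainEquivalences.Morita} reduces to the stable case because $\csts(X)=\cstu(X)\otimes\cK$ and Morita equivalence of $\cstu(X)$ and $\cstu(Y)$ (both of which contain the compacts as an essential ideal and are ``stably finite-dimensional on the diagonal'') is equivalent to a $*$-isomorphism of their stabilizations. The implications \eqref{ItemThmMainEquivalences.StIso}$\Leftrightarrow$\eqref{ItemThmMainEquivalences.UC}$\Leftrightarrow$\eqref{ItemThmMainEquivalences.Roe} among the non-rigid statements, and likewise \eqref{ItemThmMainEquivalences.StIsoR}$\Leftrightarrow$\eqref{ItemThmMainEquivalences.UCR}$\Leftrightarrow$\eqref{ItemThmMainEquivalences.RoeR}, come from the general fact that $\csts(X)$, $\mathrm{UC}^*(X)$, and $\cstr(X)$ are all Morita equivalent / stably isomorphic to one another in a canonical propagation-preserving way (the uniform algebra $\mathrm{UC}^*(X)$ is a full corner, and $\cstr(X)$ is obtained by tensoring with $\cK(H_X)$), so a rigid isomorphism of one transfers to a rigid isomorphism of the others.

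The main obstacle, I expect, is the step \eqref{ItemThmMainEquivalences.Roe}$\Rightarrow$\eqref{ItemThmMainEquivalences.RoeR} — i.e., extracting rigidity from a bare $*$-isomorphism of \emph{Roe} algebras (as opposed to uniform Roe algebras). The uniform-Roe-algebra case was handled in \cite{BragaFarah2018,BragaFarahVignati2019}, but $\cstr(X)=\cstu(X)\otimes\cK(H_X)$ carries the extra infinite-dimensional multiplicity, so the ``canonical masses'' one tracks are no longer rank-one projections but full copies of $\cK(H_X)$ sitting over each point, and one must show that an isomorphism cannot smear such a block across infinitely many points of $Y$ in a way that survives the ghost/compactness dichotomy. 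Resolving this requires a careful analysis of how $\Phi$ interacts with the ideal $\cK(\ell_2(X,H_X))$ and with the local structure, likely via an approximation argument choosing, for each $\e>0$, finite-rank truncations of the $H_X$-fibers and controlling the propagation of their images — this is the technical heart of the generalization from uniform Roe algebras to Roe algebras promised in the introduction, and it is where the hypothesis on sparse subspaces of $Y$ is genuinely used.
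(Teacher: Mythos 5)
Your first block (the equivalence of \eqref{ItemThmMainEquivalences.CE}--\eqref{ItemThmMainEquivalences.RoeR}) matches the paper's route: \eqref{ItemThmMainEquivalences.CE} implies the rigid statements by functoriality (the paper quotes the proof of Brodzki--Niblo--Wright), and a rigid $*$-isomorphism gives coarse maps $x\mapsto y_x$, $y\mapsto x_y$ from the witnessing assignments (Lemma \ref{LemmaTheMapsAreCoarse}) which are close to mutual inverses by the \v{S}pakula--Willett argument. The problems are in your second block. Your proposed proof of \eqref{ItemThmMainEquivalences.StIso}$\Leftrightarrow$\eqref{ItemThmMainEquivalences.UC}$\Leftrightarrow$\eqref{ItemThmMainEquivalences.Roe} (and the rigid analogues) rests on the claim that $\csts(X)$, $\mathrm{UC}^*(X)$ and $\cstr(X)$ are canonically stably isomorphic, with ``$\cstr(X)=\cstu(X)\otimes\cK(H_X)$'' and $\mathrm{UC}^*(X)$ a full corner. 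That identity is false: $\cstu(X)\otimes\cK(H_X)\cong\csts(X)$, which is a \emph{proper} subalgebra of $\cstr(X)$ for infinite $X$ (a block-diagonal operator whose diagonal blocks are projections of unbounded rank has propagation $0$ and compact entries, hence lies in $\cstr(X)$, but is at distance $1$ from $\csts(X)$); and even where Morita equivalences do hold, Morita equivalence does not transport a $*$-isomorphism of one algebra into a $*$-isomorphism of another. The paper never performs such a transfer: it proves each of \eqref{ItemThmMainEquivalences.StIso}, \eqref{ItemThmMainEquivalences.UC}, \eqref{ItemThmMainEquivalences.Roe} separately implies its rigid counterpart, each rigid counterpart implies \eqref{ItemThmMainEquivalences.CE}, and \eqref{ItemThmMainEquivalences.CE} implies everything; the only Morita step is \eqref{ItemThmMainEquivalences.Morita}$\Leftrightarrow$\eqref{ItemThmMainEquivalences.StIso} via Brown--Green--Rieffel (both $\cstu$ algebras being unital), not via any ``stably finite-dimensional diagonal'' consideration.

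The second gap is the asymmetry. A rigid $*$-isomorphism requires \emph{both} $\Phi$ and $\Phi^{-1}$ to be rigid, while the ghost hypothesis is imposed only on $Y$; your sketch upgrades $\Phi$ but says nothing about $\Phi^{-1}$. The paper's mechanism is: $\Phi$, being an isomorphism (hence an embedding onto a hereditary subalgebra), is spatially implemented by an isometry, so it is strongly continuous and rank preserving (Lemma \ref{LemmaPhiStronglyContAndU}); then the finite-rank projections $\Phi(e_{(x,u),(x,u)})$ satisfy the hypotheses of Lemma \ref{LemmaFinRankProjInBDelta}, whose proof shows that failure of the rigidity inequality lets one assemble a noncompact ghost projection supported on a sparse subspace of $Y$ --- there is no ``compact correction'' of $\Phi$; the rigidity constant is shown to be positive outright. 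Rigidity of $\Phi$ already yields a coarse embedding $X\to Y$ (Theorem \ref{ThmEmbHerSubAlg}), and since the property ``all sparse subspaces yield only compact ghost projections'' pulls back along coarse embeddings (Corollary \ref{Cor}, quoting \cite{BragaFarahVignati2019}), the same argument then applies to $\Phi^{-1}$. Finally, your anticipated obstacle about ``full copies of $\cK(H_X)$ over each point'' does not arise: the rank-one projections $e_{(x,u),(x,u)}$ lie in every Roe-like algebra, and rank preservation of $\Phi$ keeps their images finite rank, which is exactly what Lemma \ref{LemmaFinRankProjInBDelta} needs; no finite-rank truncation or propagation-approximation argument of the kind you describe is required.
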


The main contribution of these notes to Theorem \ref{ThmMainEquivalences} are the implications \eqref{ItemThmMainEquivalences.Morita}$\Rightarrow$\eqref{ItemThmMainEquivalences.CE},  \eqref{ItemThmMainEquivalences.StIso}$\Rightarrow$\eqref{ItemThmMainEquivalences.CE},  \eqref{ItemThmMainEquivalences.UC}$\Rightarrow$\eqref{ItemThmMainEquivalences.CE}, and 
\eqref{ItemThmMainEquivalences.Roe}$\Rightarrow$\eqref{ItemThmMainEquivalences.CE}, under the hypothesis that all sparse subspaces of $Y$ yield only compact ghost projections in their Roe algebras.

As for our second goal, we start by making  explicit a definition which was already implicit in all the rigidity papers mentioned above. 

\begin{definition}
Let $X$ be a uniformly locally finite metric space. 
\begin{enumerate}
\item We say that $X$ is \emph{Roe rigid} if $X$ is coarsely equivalent to any uniformly locally finite metric space $Y$ so that $\cstr(X) $ and $\cstr(Y) $ are $*$-isomorphic.
\item We say that $X$ is \emph{uniform Roe rigid} if $X$ is coarsely equivalent to any uniformly locally finite metric space $Y$ so that $\cstu(X)$ and $\cstu(Y)$ are $*$-isomorphic.
\end{enumerate}
\end{definition}

\iffalse
\begin{remark}
It is worth noticing that up to bijective coarse equivalence, we may always assume that a uniformly locally finite metric space is also uniformly discrete. Indeed, if $(X,d)$ is a uniformly locally finite metric space, then $\partial:=d+1$ defines a metric on $X$, making it uniformly locally finite and uniformly discrete. Moreover, the identity map is a bijective coarse equivalence between $(X,d)$ and $(X,\partial)$.
\end{remark}
\fi

With this terminology, Theorem \ref{ThmMainEquivalences} states that if all sparse subspaces of a uniformly locally finite metric space yield only compact ghost projections in their Roe algebras, then $X$ is Roe rigid. Under the same conditions, we can also conclude that $X$ is uniform Roe rigid (Proposition \ref{PropUnfRoeRig}). We use this in order to prove the following:

\begin{theorem} \label{ThmCBCwithCoefImplyMainGeomProp}
Let $X$ be a uniformly locally finite metric space and assume that $X$ satisfies the coarse Baum-Connes conjecture with coefficients. Then $X$ is both Roe rigid and uniform Roe rigid.
\end{theorem}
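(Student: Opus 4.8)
The plan is to reduce the theorem to the geometric condition of Definition~\ref{DefiMainGeomProp}\eqref{ItemDefiMainGeomProp}. Once we know that a uniformly locally finite space $X$ satisfying the coarse Baum--Connes conjecture with coefficients has the property that all of its sparse subspaces yield only compact ghost projections in their Roe algebras, Theorem~\ref{ThmMainEquivalences} gives that $X$ is Roe rigid and Proposition~\ref{PropUnfRoeRig} gives that $X$ is uniform Roe rigid. So the whole point is: fix a sparse subspace $X'=\bigsqcup_n X'_n\subseteq X$ and a ghost projection $p\in\cstr(X')$, and prove that $p$ is compact.

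First I would use permanence under passing to subspaces. Since $X'$ sits inside the uniformly locally finite space $X$ --- hence is coarsely embedded in a space of bounded geometry --- and since the coarse Baum--Connes conjecture with coefficients passes to such subspaces (a permanence property which the plain conjecture is not known to enjoy), the space $X'$ again satisfies the coarse Baum--Connes conjecture with coefficients; this is exactly where the ``with coefficients'' hypothesis is used. In particular the coarse assembly map for $X'$ is an isomorphism, hence surjective.

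The heart of the proof is then the assertion that, for a sparse metric space $X'$, surjectivity of the coarse assembly map forces every ghost projection in $\cstr(X')$ to be compact. For sparse $X'$ the Roe algebra is, to a large extent, block-diagonal along the pieces $X'_n$, so a ghost projection takes the form $p=(p_n)_n$ with $p_n$ a projection supported on $X'_n$ and with all matrix entries of $p$ tending to $0$ in norm; were $p$ non-compact, we would have $p_n\neq 0$ for infinitely many $n$. The point is that such a $p$ would represent a class in $K_0(\cstr(X'))$ that is detected inside the ideal of ghost operators yet lies outside the range of the coarse assembly map --- this is precisely the mechanism by which non-compact ghost projections obstruct the coarse Baum--Connes conjecture, and it is what makes expanders into counterexamples to the plain conjecture. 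Surjectivity of the assembly map for $X'$ therefore leaves no room for a non-compact ghost projection, and $p$ must be compact; then, as in the first paragraph, Theorem~\ref{ThmMainEquivalences} and Proposition~\ref{PropUnfRoeRig} finish the argument. I expect this last step --- making precise, for the non-uniform Roe algebra and at this level of generality, both the permanence of the conjecture with coefficients under passing to subspaces and the fact that a non-compact ghost projection genuinely escapes the image of the assembly map --- to be the main obstacle.
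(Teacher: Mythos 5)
Your overall skeleton matches the paper: reduce to showing that every sparse subspace $X'\subset X$ yields only compact ghost projections in $\cstr(X')$, then conclude Roe rigidity from Theorem \ref{ThmMainEquivalences} and uniform Roe rigidity from Proposition \ref{PropUnfRoeRig}; and the first use of ``with coefficients'' is indeed permanence under passing to subspaces (Theorem \ref{BCconjectures}(1)), so that $X'$ again satisfies the conjecture. But the step you yourself flag as the main obstacle is a genuine gap, and it is not filled by the slogan you invoke. It is \emph{not} a known fact that, for a sparse space, a noncompact ghost projection automatically represents a class outside the image of the coarse assembly map; the statement that makes this precise (Theorem \ref{ThmBdryInj}, following Finn-Sell and Finn-Sell--Wright) has a hypothesis you never secure: \emph{injectivity of the boundary coarse Baum--Connes assembly map} for $X'$, i.e.\ the Baum--Connes conjecture (at least injectivity) for the boundary groupoid $G(X')|_{\beta X'\setminus X'}$ with coefficients in $\ell_\infty(X',\cK)/c_0(X',\cK)$. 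This is where the ``with coefficients'' hypothesis is used a \emph{second} time in the paper: one writes $G(X')=\beta X'\rtimes G'$ with $G'$ second countable and \'{e}tale (Skandalis--Tu--Yu), and then uses permanence of Baum--Connes with coefficients under passing to closed \'{e}tale subgroupoids (Theorem \ref{BCconjectures}(2)) to get the boundary conjecture. Only then does surjectivity of the assembly map for $X'$ clash with the existence of a noncompact ghost projection. Without the boundary injectivity input, ``surjectivity leaves no room for a noncompact ghost projection'' is an assertion, not an argument: in the expander counterexamples the conclusion that the ghost class escapes the image is itself proved via exactly this boundary (or maximal/reduced comparison) mechanism, not read off from the definition of a ghost.

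Two smaller points. The picture of a ghost projection in $\cstr(X')$ as literally block-diagonal, $p=(p_n)_n$ with $p_n$ supported on $X'_n$, is not accurate (finite propagation only holds approximately, and off-diagonal blocks between nearby pieces need not vanish); the paper never needs such a normal form, since Theorem \ref{ThmBdryInj} is stated for an arbitrary noncompact ghost projection, using only that the sparse space has finite coarse components (Proposition \ref{PropSparseIFFOnlyFinCoarComp}). Also, the paper's definitions of the coarse conjectures are for uniformly discrete spaces, so one first replaces the metric $d$ by $d+1$ to assume $X$ uniformly discrete; this is routine but should be said.
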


Since there are uniformly locally finite metric spaces which satisfy the coarse Baum-Connes conjecture with coefficients but which do not coarsely embed into a Hilbert space (see Remark \ref{RemGpNoCBC} and Proposition \ref{PropGpNoCBC}), Theorem \ref{ThmCBCwithCoefImplyMainGeomProp} provides  new examples of metric spaces for which Problems \ref{ProblemRigidity}\eqref{ProblemRigidityUnifRoeAlg} and  \ref{ProblemRigidity}\eqref{ProblemRigidityRoeAlg} have a positive answer.

Moreover, we obtain some other technical conditions on a uniformly locally finite metric space so that it is Roe rigid.  We refer the reader to Section \ref{SectionCBC} for the definitions of those technical conditions.

\begin{theorem} \label{ThmPropWhichImplyMainGeomProp}
Let $X$ be a uniformly locally finite metric space. Then $X$ is both Roe rigid and uniform Roe rigid if any one of the following conditions holds:
\begin{enumerate}
\item    $X$  is sparse, admits a fibred coarse embedding into a Hilbert space,    and satisfies the coarse Baum-Connes conjecture.

\item  $X=\Box \Gamma$ is the box space of a residually finite, finitely generated discrete group $\Gamma$ that admits a coarse embedding into a Banach space with property (H), and $X$ satisfies the coarse Baum-Connes conjecture.

\item  $X=\Gamma$ is a countable discrete group which satisfies the   Baum-Connes conjecture with coefficients.
\end{enumerate}
\end{theorem}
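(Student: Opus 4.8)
The plan is to reduce all three cases to Theorem \ref{ThmCBCwithCoefImplyMainGeomProp}, i.e.\ to verify that under each of the hypotheses the space $X$ satisfies the coarse Baum--Connes conjecture with coefficients; once this is done, Roe rigidity and uniform Roe rigidity follow immediately. Thus the content of the theorem is really three implications from the literature on assembly maps, and the work is in citing and assembling them correctly rather than in any new construction.

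For case (1), $X$ is sparse, so $X = \bigsqcup_n X_n$ with $|X_n| < \infty$ and $d(X_n, X_m) \to \infty$; such a disjoint union is coarsely equivalent to a disjoint union of finite metric spaces, and for such spaces the coarse Baum--Connes conjecture with coefficients for $X$ can be deduced from the coarse Baum--Connes conjecture (without coefficients) together with a fibred coarse embedding into Hilbert space. This is precisely the mechanism behind the work of Chen--Wang--Yu and Finn-Sell: a fibred coarse embedding of a sparse (or more generally "coarsely disjoint union") space yields the coarse Novikov conjecture, and when combined with the assumed coarse Baum--Connes conjecture one upgrades to the version with coefficients. First I would invoke the relevant theorem (Chen--Wang--Yu, \emph{The maximal coarse Baum--Connes conjecture for spaces which admit a fibred coarse embedding into Hilbert space}, or Finn-Sell's companion results) to conclude $X$ satisfies CBC with coefficients, and then apply Theorem \ref{ThmCBCwithCoefImplyMainGeomProp}.

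For case (2), $X = \Box\Gamma$ is the box space of a residually finite finitely generated group $\Gamma$ admitting a coarse embedding into a Banach space with property (H) in the sense of Kasparov--Yu. The key input is the theorem of Kasparov--Yu that such a coarse embedding implies the (maximal) coarse Novikov conjecture, and — combined with the hypothesis that $\Box\Gamma$ satisfies the coarse Baum--Connes conjecture — one obtains the coarse Baum--Connes conjecture with coefficients for $\Box\Gamma$ via the standard descent/coefficient-bootstrapping argument for box spaces (here the $\ell^\infty$-coefficient algebra associated to the box space is handled exactly as in the sparse case, since a box space is a coarse disjoint union of the finite quotients $\Gamma/\Gamma_n$). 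So again the conclusion reduces to Theorem \ref{ThmCBCwithCoefImplyMainGeomProp}. For case (3), $X = \Gamma$ is a countable discrete group satisfying the Baum--Connes conjecture with coefficients; here one uses the standard translation between the (group-equivariant) Baum--Connes conjecture with coefficients for $\Gamma$ and the coarse Baum--Connes conjecture with coefficients for $\Gamma$ viewed as a metric space with a proper left-invariant metric — this equivalence goes through Yu's descent principle and the identification of the coarse assembly map with the equivariant one for the coefficient algebra $\ell^\infty(\Gamma, \mathcal K)$ (cf.\ the Roe--Willett--Yu framework). Once CBC with coefficients for the metric space $\Gamma$ is in hand, Theorem \ref{ThmCBCwithCoefImplyMainGeomProp} finishes the argument.

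The main obstacle I anticipate is entirely bookkeeping: in each case one must be careful that the version of the coarse Baum--Connes conjecture being imported (maximal vs.\ reduced, with vs.\ without coefficients, and over which coefficient $C^*$-algebra) is exactly the one needed as the hypothesis of Theorem \ref{ThmCBCwithCoefImplyMainGeomProp}, and that the coarse-equivalence-invariance of the relevant properties (sparseness, box-space structure, the conclusion of CBC with coefficients) is applied legitimately. There is no genuinely hard new mathematics; the delicate point is matching conventions across the cited sources (Chen--Wang--Yu, Finn-Sell, Kasparov--Yu, Oyono-Oyono--Yu for box spaces) so that the coefficient algebra appearing in each of their statements is the one for which Theorem \ref{ThmCBCwithCoefImplyMainGeomProp} applies.
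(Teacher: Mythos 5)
Your reduction works only for case (3): there the paper does exactly what you propose, using \cite[Lemma 4.1]{SkandalisTuYu2002} (Theorem \ref{BCconjectures}(5)) to pass from the Baum--Connes conjecture with coefficients for $\Gamma$ to the coarse Baum--Connes conjecture with coefficients for $|\Gamma|$, and then invoking the machinery behind Theorem \ref{ThmCBCwithCoefImplyMainGeomProp}. For cases (1) and (2), however, your plan has a genuine gap: you assert that a fibred coarse embedding (resp.\ a coarse embedding of $\Gamma$ into a Banach space with property (H)) combined with the coarse Baum--Connes conjecture \emph{without} coefficients can be ``upgraded'' to the coarse Baum--Connes conjecture \emph{with} coefficients. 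No such upgrade is available in the cited literature, and none is sketched in your proposal. What Chen--Wang--Yu and Finn-Sell actually give in the fibred setting is an injectivity (Novikov-type) statement --- concretely, a-T-menability of the boundary groupoid $G(X)|_{\beta X\setminus X}$ and hence the \emph{boundary} coarse Baum--Connes conjecture --- and what Kasparov--Yu gives in the property (H) setting is injectivity of the assembly map for $\Gamma$ with the specific coefficient algebra $\ell_\infty(X,\mathcal{K})/c_0(X,\mathcal{K})$. These are statements about one particular coefficient algebra (or about injectivity only), and there is no ``descent/coefficient-bootstrapping'' that promotes them, together with surjectivity of the plain coarse assembly map, to an isomorphism for \emph{all} $G(X)$-coefficients. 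Indeed, if such an implication held, cases (1) and (2) would already be contained in Theorem \ref{ThmCBCwithCoefImplyMainGeomProp} and would not have been stated separately.

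The paper's actual route avoids this entirely: it proves the geometric condition of Definition \ref{DefiMainGeomProp}(2) directly. The key tool is Theorem \ref{ThmBdryInj} (following Finn-Sell and Finn-Sell--Wright): for a sparse uniformly discrete space, if the boundary coarse Baum--Connes assembly map is injective, then the class of a noncompact ghost projection cannot lie in the image of the coarse assembly map; so injectivity on the boundary plus surjectivity of the coarse assembly map (i.e.\ CBC without coefficients, which is exactly the hypothesis) rules out noncompact ghost projections. In case (1) boundary injectivity comes from the fibred coarse embedding via Theorems \ref{BCconjectures}(3); in case (2) it comes from the identification $G(X)|_{\beta X\setminus X}\cong(\beta X\setminus X)\rtimes\Gamma$ for box spaces together with Kasparov--Yu's injectivity result for $\Gamma$ with coefficients $\ell_\infty(X,\mathcal{K})/c_0(X,\mathcal{K})$. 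Once the geometric condition is established (Theorem \ref{geometric condition}), Roe rigidity follows from Theorem \ref{ThmMainEquivalences} and uniform Roe rigidity from Proposition \ref{PropUnfRoeRig}, after the harmless replacement of $d$ by $d+1$ to make $X$ uniformly discrete. To repair your proposal you would need to replace the claimed reduction to CBC with coefficients in cases (1) and (2) by this boundary-injectivity plus ghost-projection argument.
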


This paper is organized as follows. In Section \ref{SectionBackground}, we deal with the definitions and background necessary for these notes. In particular, in Subsection \ref{SubsectionCBC}, we talk about the Baum-Connes conjectures. In Section \ref{SectionRigidityHomo}, we present the main tool in order to generalize the results for uniform Roe algebras obtained in \cite{BragaFarah2018,BragaFarahVignati2019} to the context of Roe algebras (Lemma \ref{LemmaFinRankProjInBDelta}). Section \ref{SectionEmbHereSubalg} starts by dealing with embeddings between Roe algebras, which is the essential step  so that we can have the asymmetry of Theorem \ref{ThmMainEquivalences}, i.e., the fact that only $Y$ has to satisfy a geometric condition. The proof of Theorem \ref{ThmMainEquivalences} is also presented in Section \ref{SectionEmbHereSubalg}. Finally, Section \ref{SectionCBC} deals with the coarse Baum-Connes conjecture, the proof of Theorems \ref{ThmCBCwithCoefImplyMainGeomProp} and \ref{ThmPropWhichImplyMainGeomProp}, and provides examples of new spaces for which the rigidity problem has a positive answer.

\section{Preliminaries}\label{SectionBackground}

 If $H$   is a Hilbert space, we denote the closed unit ball of $H$ by $B_H$. Moreover,    $\cB(H)$ and $\cK(H)$ denote the spaces of bounded and compact operators on the Hilbert space $H$, respectively. 

Let $X$ be a set and $H_X$ be a Hilbert space. Then the Hilbert space $\ell_2(X)\otimes H_X$ is canonically isometric to $\ell_2(X,H_X)$.  Given $x,y\in X$, the operator $e_{xy}\in \cB(\ell_2(X,H_X))$ is defined by
\[e_{xy}\delta_z\otimes w=\langle \delta_z,\delta_x\rangle\delta_y\otimes w\]
for all $z\in X$ and all $w\in H_X$. Given $a\in \cB(\ell_2(X,H_X))$, we let $a_{xy}= e_{yy}ae_{xx}$ for all $x,y\in X$, and define
\[\supp(a)=\{(x,y)\in X\times X\mid a_{xy}\neq 0\}.\] Clearly, for each $x,y\in X$,  $a_{xy}$ can be canonically identified with an element of $\cB(H_X)$, and we do so without further mention throughout this paper. Given $v,u\in H_X$, the operator $e_{(x,v),(y,u)}\in \cB(\ell_2(X,H_X))$ is defined by
\[e_{(x,v),(y,u)}\delta_z\otimes w=\langle\delta_z\otimes w, \delta_x\otimes v\rangle\delta_y\otimes u\]
for all $z\in X$ and all $w\in H_X$. Given $A\subset X$, define $\chi_A=\sum_{x\in A}e_{xx}$. If $A=X$, we simply write $1=\chi_X$.

\subsection{Roe algebras} If $(X,d)$ is a metric space and $a\in \cB(\ell_2(X,H_X))$, the \emph{propagation of $a$} is defined by 
\[\propg(a)=\sup \{d(x,y)\mid e_{yy}ae_{xx}\neq 0\}.\]
We say the metric space $X$ is \emph{uniformly locally finite}, which we abbreviate by \emph{u.l.f}, if \[\sup_{x\in X}|\{y\in X\mid d(x,y)\leq r\}|<\infty\] for all $r>0$.

We now define the central object of this work, i.e., the Roe algebra of a u.l.f. metric space. We also define variants of this algebra whose rigidity properties are closely related to the ones of the Roe algebra (Theorem \ref{ThmMainEquivalences}).

\begin{definition}\label{DefiAlgebras}
Let $X$ be a u.l.f. metric space and $H_X$ be  an infinite dimensional separable  Hilbert space. 
\begin{enumerate}
\item The \emph{Roe algebra of $X$ over $H_X$}, denoted by $\cstr(X)$, is defined as the closure of all $a\in \cB(\ell_2(X,H_X))$ so that $\propg(a)<\infty$ and $a_{xy}$ is compact for all $x,y\in X$. 
\item The \emph{uniform  algebra of $X$ over $H_X$}, denoted by $\mathrm{UC}^* (X)$, is defined as the closure of all $a\in \cB(\ell_2(X,H_X))$ so that $\propg(a)<\infty$ and so that there exists $N\in\N$ such that $\rank(a_{xy})\leq N$   for all $x,y\in X$. 
\item The \emph{stable Roe algebra of $X$ over $H_X$}, denoted by $\csts(X)$, is defined as the closure of all $a\in \cB(\ell_2(X,H_X))$ so that $\propg(a)<\infty$ and so that there exists a finite dimensional subspace $H\subset H_X$  such that $a_{xy}\in \cB(H)$   for all $x,y\in X$.
\item If $H_X= \C$, all the algebras above coincide and it is called the \emph{uniform Roe algebra of $X$}, denoted by $\cstu(X)$.
\end{enumerate}
Notice that, in order to simplify notation, the space $H_X$ is omitted in the notations of the algebras above. 
\end{definition} 

Clearly, $\csts(X)\subset \mathrm{UC}^*(X)\subset \cstr(X)$. Moreover, fixing a rank 1 projection on $\cB(H_X)$, there is a canonical embedding of $\cstu(X)$   into $\csts(X)$ for all u.l.f. metric spaces $X$. Also, notice that $\csts(X)$ is canonically isomorphic to $\cstu(X)\otimes \cK(H_X)$ for all metric spaces $X$.

The concept of ghost operators plays an essential role in these notes.

\begin{definition}\label{DefiGhost}
Let $X$ be a u.l.f. metric space. An operator $a\in \cstr(X)$ is a \emph{ghost} if for all $\eps>0$ there exists a finite $A\subset X$ so that $\|a_{xy}\|<\eps$ for all $x,y\not\in A$. 
\end{definition}

Notice that, given any u.l.f. metric space $X$,   all compact operators on $\ell_2(X)$  are ghosts. Also,   under the canonical embeddings $\cstu(X)\hookrightarrow \csts(X)$ described above, the concept of a ghost operator in $\cstu(X)$ is also well-defined for all u.l.f. metric space $X$.

Since we are interested in several types of ``Roe algebras'' -- $\cstu(X)$, $\csts(X)$, $\mathrm{UC}^*(X)$, and $\cstr(X)$ -- the following definition has the purpose of simplifying the statements of our technical lemmas below. 

\begin{definition} 
Let $X$ be a u.l.f. metric space. A \cstar-subalgebra $A\subset \cstr(X)$ is called \emph{Roe-like} if $\csts(X)\subset A$. 
\end{definition}

Following \cite{BragaFarah2018}, we introduce the notion of rigid $*$-homomorphisms and rigid $*$-isomorphisms between Roe-like algebras. Before stating its technical definition,  let us motivate it. We are interested in when a $*$-isomorphism $ \cst(X)\to \cst(Y)$ gives a coarse equivalence $X\to Y$. Ideally, this coarse equivalence   should be related to the $*$-isomorphism and ``close to witnessing it''. More precisely, say $H=H_X=H_Y$  and let $f:X\to Y$ be a bijective coarse equivalence. Then $f$ induces a unitary  $U:\ell_2(X,H)\to \ell_2(Y,H)$ by letting  $U\delta_x\otimes u=\delta_{f(x)}\otimes u$ for all $x\in X$ and all $u\in H$. Moreover,   $\Phi=\Ad(U)$ is a $*$-isomorphism between $\cst(X)$ and $\cst(Y)$, and  this $*$-isomorphism is so that 
\[\|\Phi(e_{(x,u),(x,u)})\delta_{f(x)}\otimes u\|=1, \ \text{ for all }\ x\in X\ \text{ and all }u\in H.\]
Obviously, one should not expect that all $*$-isomorphisms $\cst(X)\to \cst(Y)$ satisfy this property above. However, the notion of rigid $*$-isomorphism introduced below is a weakening of this property which is strong enough for our goals. Moreover, as shown in Corollary \ref{CorHomoRigid}, this property is often satisfied.

\begin{definition}\label{DefiRigid} Let $X$ and $Y$ be metric spaces, and $A\subset \cstr(X)$ and $B\subset\cstr(Y)$ be Roe-like \cstar-subalgebras.   A $*$-homomorphism  $ \Phi:A\to B$ is said to be a \emph{rigid $*$-homomorphism} if
\[\sup_{u\in B_{H_X}}\inf_{x\in X}\sup_{y\in Y,v\in B_{H_Y}}\|\Phi(e_{(x,u),(x,u)})\delta_{y}\otimes v\|>0.\] 
A $*$-isomorphism  $ \Phi:A\to B$ is called a \emph{rigid $*$-isomorphism} if both $\Phi$ and $\Phi^{-1}$ are rigid $*$-homomorphisms. In this case,  the algebras $A$ and $B$ are \emph{called rigidly $*$-isomorphic}.
\end{definition}

In the definition above, an assignment $x\in X\mapsto(y_x,v_x)\in Y\times B_{H_Y}$ \emph{witnesses that $\Phi$} is rigid if there exist $u\in B_{H_X}$ and $\delta>0$  such that 
\[\|\Phi(e_{(x,u),(x,u)})\delta_{y_x}\otimes v_x\|>\delta,\]
for all $x\in X$.
\subsection{Coarse geometry}

Let $(X,d)$ and $(Y,\partial)$ be metric spaces, and $f:X\to Y$ be a map. We say that $f$ is \emph{coarse} if for all $s>0$ there exists $r>0$ so that
\[d(x,y)<s\text{ implies }\partial(f(x),f(y))<r\text{ for all } x,y\in X,\]  and we say that $f$ is \emph{expanding} if for all $ r>0$ there exists $s>0$ so that
\[d(x,y)>s\text{ implies }\partial(f(x),f(y))>r\text{ for all  }x,y\in X.\] If $f$ is both coarse and expanding, $f$ is said to be a \emph{coarse embedding}. If $f$ is a coarse embedding so that $\sup_{y\in Y}\partial (y,f(X))<\infty$, then $f$ is called a \emph{coarse equivalence}. It is easy to check that $f$ is a coarse equivalence if and only if there exists a coarse map $g:Y\to X$ so that 
\[\sup_{x\in X}d(x,g\circ f(x))<\infty\text{ and }
\sup_{y\in Y}d(y,f\circ g(y))<\infty. \]
In this case, we say that $X$ and $Y$ are \emph{coarsely equivalent}.

\subsection{Baum-Connes conjectures}\label{SubsectionCBC}
The coarse Baum-Connes conjectures are defined in terms of certain assembly maps. Due to the high technicality of the precise statements of those  conjectures and since we do not make explicit use of the description of those assembly maps, we will not present the formal definitions in this paper but only direct the reader to appropriate sources.   

Let $G$ be a  locally compact, $\sigma$-compact, Hausdorff groupoid  with a Haar system.\footnote{See \cite[Section 1]{Tu00} for the definition and main properties of groupoids and Haar systems.} 
%Loosely speaking, the Baum-Connes conjecture for $G$ states that a certain assembly map
%\[ \mu_r:K_*^{top}(G)\rightarrow K_*(\cstr_r(G)) \]
%is an isomorphism, thereby providing means of calculating the $K$-theory groups of the reduced $\cstr$-algebra of $G$. {\color{red} Can you ad a precise reference for the formal statement of this conjecture here?}
A $\cstr$-algebra endowed with an action of $G$ is called a \emph{$G$-$\cstr$-algebra}. Loosely speaking, given a $G$-$\cstr$-algebra $A$, the Baum-Connes conjecture for $G$ with coefficients in $A$ states that a certain assembly map
\[ \mu_{r,A}:K_*^{top}(G,A)\rightarrow K_*(A\rtimes_r G) \]
is an isomorphism (we refer the reader to \cite[D\'{e}finition 5.2]{Tu99hyp} for details). Therefore, this provides means of calculating the $K$-theory groups of the reduced crossed product of $A$ by $G$.

\begin{definition}\label{DefiBCor} Let $G$ be a locally compact, $\sigma$-compact, Hausdorff groupoid with a Haar system.
\begin{enumerate}
\item  Let $A$ be a $G$-$\cstr$-algebra.
We say that $G$ satisfies the \emph{Baum-Connes conjecture for $A$} if the assembly map $\mu_{r,A}$ above is an isomorphism.

\item We say that $G$ satisfies the \emph{Baum-Connes conjecture with coefficients} if the assembly map $\mu_{r,A}$ above is an isomorphism for all $G$-$\cstr$-algebras $A$.

%\item Let $\Gamma$ be a countable discrete group, and let $A$ be a $\Gamma$-$\cstr$-algebra. We say that $\Gamma$ satisfies the \emph{group Baum-Connes conjecture for $A$} if $\Gamma$ satisfies the groupoid Baum-Connes conjecture for $A$ when $\Gamma$ is regarded as a groupoid. (Add footnote saying this agrees with the original formulation of the Baum-Connes conjecture for groups)
%
%\item We say that a countable discrete group $\Gamma$ satisfies the \emph{group Baum-Connes conjecture with coefficients} if the assembly map is an isomorphism for all $\Gamma$-$\cstr$-algebras $A$.
\end{enumerate}
\end{definition}

\begin{remark}
Any countable discrete group $\Gamma$ can be regarded as a groupoid satisfying  the properties listed in  Definition \ref{DefiBCor}. Hence,   it makes sense to say that $\Gamma$ satisfies the Baum-Connes conjecture with coefficients. Moreover, this agrees with the original formulation of the Baum-Connes conjecture for groups in \cite[Conjecture 9.6]{BCH}.\footnote{We refer the reader to \cite{Tu00} for a survey of the Baum-Connes conjecture for groupoids, and to \cite{BCsurvey} for a survey of the Baum-Connes conjecture for groups.
We also refer the reader to \cite{WOBook} for facts about $K$-theory of $\cstr$-algebras. 
}
\end{remark}

In these notes, we will need the coarse Baum-Connes conjecture with coefficients for a u.l.f. metric space, which can be defined in terms of the   Baum-Connes conjecture with coefficients for a certain groupoid related to the metric space.
Precisely, let us first recall the coarse groupoid $G(X)$ associated with a u.l.f. metric space $(X,d)$. For every $R>0$, we consider the \emph{$R$-neighborhood} of the diagonal in $X\times X$, i.e., $\Delta_R:=\{(x,y)\in X\times X: d(x,y)\leq R\}$. Define
\begin{align*}
G(X):=\bigcup_{R>0}\overline{\Delta}_R\subseteq \beta(X\times X),
\end{align*}
where $\beta(X\times X)$ denotes the Stone-\v{C}ech compactification of $X\times X$.
It turns out that the domain, range, inversion and multiplication maps on the pair groupoid $X\times X$ have unique continuous extensions to $G(X)$. With respect to these extensions, $G(X)$ becomes a principal, étale, locally compact, $\sigma$-compact Hausdorff topological groupoid whose  unit space $G(X)^{(0)}$ is $\beta X$ (see \cite[Proposition~3.2]{SkandalisTuYu2002} or \cite[Theorem~10.20]{RoeBook}). Since $G(X)^{(0)}=\beta X$ is totally disconnected, $G(X)$ is also ample (see \cite[Proposition 4.1]{Exel2010}). Moreover, there is a canonical isomorphism between  $\cstu(X)$  and the reduced groupoid $\cstr$-algebra of $G(X)$ (see \cite[Proposition~10.29]{RoeBook} for a proof).

Recall that a metric space $(X,d)$ is \emph{uniformly discrete} if $\inf_{x\neq y}d(x,y)>0$.

\begin{definition}[\cite{MR2891024, MR3197659}]
Let $X$ be a uniformly discrete u.l.f. metric space. 
\begin{enumerate}
\item  $X$ satisfies the \emph{coarse Baum-Connes conjecture}   if the coarse groupoid $G(X)$ of $X$ satisfies the  Baum-Connes conjecture for $\ell_\infty(X,\mathcal{K}(\ell_2(\NN)))$.
 
\item  $X$ satisfies the   \emph{coarse Baum-Connes conjecture with coefficients} if $G(X)$ satisfies the  Baum-Connes conjecture with coefficients.

\item $X$ satisfies the \emph{boundary coarse Baum-Connes conjecture} if the boundary groupoid $G(X)|_{\beta X\backslash X}$ of $X$ satisfies the   Baum-Connes conjecture for $ \ell_\infty(X,\mathcal{K}(\ell_2(\NN)))/c_0(X,\mathcal{K}(\ell_2(\NN)))$. 
\end{enumerate}
If $X$ is a u.l.f. metric space, we say that $X$ satisfies the \emph{coarse Baum-Connes conjecture}  (resp. \emph{coarse Baum-Connes conjecture with coefficients}, or  \emph{boundary coarse Baum-Connes conjecture}) if $X$ is bijectively coarsely equivalent to a uniformly discrete metric space which satisfies the \emph{coarse Baum-Connes conjecture}  (resp. coarse Baum-Connes conjecture with coefficients, or  boundary coarse Baum-Connes conjecture).\footnote{Notice that if $X$ is bijectively coarsely equivalent to a metric space satisfying one of those properties, then all metric spaces which are bijectively coarsely equivalent to $X$ also satisfy the same property. } 
\end{definition}

\begin{remark}
In part (2) of the definition above, we require the groupoid $G(X)$ to satisfy the Baum-Connes conjecture for all coefficients but in fact, this is equivalent to requiring $G(X)$ to satisfy the Baum-Connes conjecture for all separable coefficients.

Indeed, if $G$ is a second countable, \'{e}tale, ample groupoid, and $A$ is a $G$-$\cstr$-algebra, then $A$ may be written as an inductive limit $\varinjlim_{i \in I}(A_i, \phi_i)$ of separable $G$-$\cstr$-algebras, where each connecting homomorphism $\phi_i$ is injective. Hence, $A\rtimes_r G\cong \varinjlim_{i\in I}A_i\rtimes_r G$, and $\lim_i K_*^{top}(G,A_i)\cong K_*^{top}(G,A)$ by \cite[Lemma 5.1 and Theorem 5.2]{BonickeDellAiera19}\footnote{Notice that this was proved in \cite[Lemma 5.1 and Theorem 5.2]{BonickeDellAiera19}  for a sequence of $\cstr$-algebras but the proof still works for a net of $\cstr$-algebras.}, and we also have $\lim_i K_*(A_i\rtimes_rG)\cong K_*(A\rtimes_rG)$ by \cite[Proposition 6.2.9 and Proposition 7.1.7]{WOBook}.
Moreover, the connecting homomorphisms are compatible with the Baum-Connes assembly maps.
Thus, if $G$ satisfies the Baum-Connes conjecture with coefficients in $A_i$ for all $i$, then $G$ satisfies the Baum-Connes conjecture with coefficients in $A$.

Although the coarse groupoid $G(X)$ is not second countable in general,   \cite[Lemma 3.3]{SkandalisTuYu2002} gives a  second countable, ample groupoid $G'$ such that $G(X)=\beta X\rtimes G'$.
Then any $G(X)$-$\cstr$-algebra $A$ is also a $G'$-$\cstr$-algebra in a canonical way, and $A\rtimes_r G(X)=A\rtimes_r (\beta X\rtimes G')\cong A\rtimes_r G'$.
By \cite[Lemma 4.1]{SkandalisTuYu2002}, the assembly map for $G(X)$ with coefficients in $A$ agrees with the assembly map for $G'$ with coefficients in $A$.
\end{remark}

In Section \ref{SectionCBC}, we will make use of some well-known results about the various Baum-Connes conjectures, and we summarize them here for ease of reference.

\begin{theorem} \label{BCconjectures} \leavevmode
\begin{enumerate}
\item\label{ItemBCconjectures.subsp} If a uniformly discrete, u.l.f. metric space $X$ satisfies the coarse Baum-Connes conjecture with coefficients, then so does any subspace of $X$ \cite[Theorem 4.2]{MR2891024}.

\item\label{ItemBCconjectures.subgpoid} Let $G$ be a locally compact groupoid isomorphic to $X\rtimes G'$, where $X$ is a compact space and $G'$ is a locally compact, second countable and \'{e}tale groupoid. Let $H$ be a closed, \'{e}tale subgroupoid of $G$. If $G$ satisfies the   Baum-Connes conjecture with coefficients, then so does $H$ \cite[Theorem 3.14]{MR2891024}.

\item\label{ItemBCconjectures.FCE} If a uniformly discrete u.l.f. metric space $X$ admits a fibred coarse embedding into Hilbert space, then the boundary groupoid $G(X)|_{\beta X\backslash X}$ is a-T-menable (\cite[Theorem 1]{FinnSell2014}). Hence, $X$ satisfies the boundary coarse Baum-Connes conjecture \cite[Th\'{e}or\`{e}me 9.3]{Tu99moy}.

\item\label{ItemBCconjectures.gpBC} A uniformly discrete, u.l.f. metric space $X$ admits a coarse embedding into Hilbert space if and only if $G(X)$ is a-T-menable \cite[Theorem 5.4]{SkandalisTuYu2002}. Hence, if $X$ admits a coarse embedding into Hilbert space, then $X$ satisfies the coarse Baum-Connes conjecture with coefficients \cite[Th\'{e}or\`{e}me 9.3]{Tu99moy}. However, the converse is false (see Remark \ref{RemGpNoCBC} and Proposition \ref{PropGpNoCBC}).

\item\label{ItemBCconjectures.gpBC} For a countable discrete group $\Gamma$ with a proper left-invariant metric, write $|\Gamma|$ for the underlying metric space. Then $G(|\Gamma|)=\beta|\Gamma|\rtimes\Gamma$ \cite[Proposition 3.4]{SkandalisTuYu2002}. Moreover, if $\Gamma$ satisfies the   Baum-Connes conjecture with coefficients, then $|\Gamma|$ satisfies the coarse Baum-Connes conjecture with coefficients \cite[Lemma 4.1]{SkandalisTuYu2002}.
\end{enumerate}
\end{theorem}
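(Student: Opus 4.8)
The plan is to read Theorem~\ref{BCconjectures} for what it is: an organized digest of results already in the literature, collected here for later use. Accordingly, the ``proof'' will not develop new mathematics but will, item by item, pinpoint the cited statement and supply the short bridge needed to put it in the form we use. The only real work is matching conventions---the coarse groupoid $G(X)$, its boundary groupoid $G(X)|_{\beta X\setminus X}$, and the three coarse Baum-Connes conjectures introduced above---with the formulations in \cite{MR2891024,SkandalisTuYu2002,Tu99moy,FinnSell2014}.

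I would first dispatch the first two items together. The second item is quoted essentially verbatim from \cite[Theorem 3.14]{MR2891024}, so here it suffices to check that our hypotheses on $G\cong X\rtimes G'$, with $X$ compact and $G'$ locally compact second countable and \'{e}tale, are exactly those required there; that $G(X)$ admits such a presentation is the content of \cite[Lemma 3.3]{SkandalisTuYu2002}, as already recalled in the remark following the definition of the coarse Baum-Connes conjectures. The first item I would then obtain as a special case: for a subspace $Y\subseteq X$ the coarse groupoid $G(Y)$ is canonically identified with the restriction $G(X)|_{\beta Y}$, and since $Y$ is a subset, $\beta Y$ is clopen in $\beta X=G(X)^{(0)}$, so this restriction is a closed \'{e}tale subgroupoid to which the second item applies. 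This is, of course, exactly how \cite[Theorem 4.2]{MR2891024} is itself derived.

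Next I would treat the two a-T-menability items. Both ultimately rest on Tu's theorem that a locally compact, $\sigma$-compact, a-T-menable groupoid with Haar system satisfies the Baum-Connes conjecture with coefficients \cite[Th\'{e}or\`{e}me 9.3]{Tu99moy}. For the fibred-embedding item I would invoke \cite[Theorem 1]{FinnSell2014} to obtain a-T-menability of the boundary groupoid $G(X)|_{\beta X\setminus X}$, feed this into Tu's theorem, and then only observe that the boundary coarse Baum-Connes conjecture is, by definition, the Baum-Connes conjecture for $G(X)|_{\beta X\setminus X}$ with the specific coefficient algebra $\ell_\infty(X,\mathcal{K})/c_0(X,\mathcal{K})$, hence is subsumed. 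For the coarse-embedding item, the equivalence between coarse embeddability into Hilbert space and a-T-menability of $G(X)$ is \cite[Theorem 5.4]{SkandalisTuYu2002}; combined with Tu's theorem this yields the stated forward implication, while the failure of the converse is not argued here but deferred to Remark~\ref{RemGpNoCBC} and Proposition~\ref{PropGpNoCBC}, where our new example is produced.

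Finally, for the group item I would quote the identification $G(|\Gamma|)=\beta|\Gamma|\rtimes\Gamma$ from \cite[Proposition 3.4]{SkandalisTuYu2002} together with the transfer result \cite[Lemma 4.1]{SkandalisTuYu2002}, which promotes the Baum-Connes conjecture with coefficients from $\Gamma$ to the associated crossed-product groupoid, i.e.\ to $G(|\Gamma|)$; this is precisely the coarse Baum-Connes conjecture with coefficients for $|\Gamma|$. The one genuine subtlety running through all of these---and the step I expect to demand the most care---is the non-second-countability of $G(X)$: Tu's theorem and the groupoid results above are stated for second countable groupoids, so each application must be routed through the second countable $G'$ of \cite[Lemma 3.3]{SkandalisTuYu2002}, exactly as in the remark following the definition above, checking that the assembly maps and coefficient algebras correspond under $A\rtimes_r G(X)\cong A\rtimes_r G'$. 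Beyond this bookkeeping, no new argument is required.
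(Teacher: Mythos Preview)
Your proposal is correct, and in fact goes beyond what the paper does: Theorem~\ref{BCconjectures} is presented there purely as a summary of known results, with no proof at all---each item simply carries its citation and the theorem is offered ``for ease of reference.'' Your item-by-item bridging (deriving (i) from (ii) via $G(Y)\cong G(X)|_{\beta Y}$, invoking Tu's theorem for the a-T-menability items, and routing the second-countability issue through the $G'$ of \cite[Lemma~3.3]{SkandalisTuYu2002}) is accurate and matches how the cited sources themselves argue, so there is nothing to correct.
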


\begin{remark} \label{RemGpNoCBC}
In \cite[Theorem~1.2]{AT18}, G. Arzhantseva and R. Tessera provide an example of a finitely generated group $\Lambda$ which is a split extension of an (infinite rank) abelian group by a finitely generated group with the Haagerup property such that $\Lambda$ does not coarsely embed into a Hilbert space. However, $\Lambda$ satisfies the   Baum-Connes conjecture with coefficients (see \cite[Corollary~3.14]{MR1836047} and \cite[Theorem~1.1]{MR1821144}).
\end{remark}

Besides the example in Remark \ref{RemGpNoCBC} above,  the authors of   \cite{AT18} also provide an additional example of a finitely generated group $\Gamma$ which is a split extension of a finitely generated group with property A by a finitely generated group with the Haagerup property such that $\Gamma$ does not coarsely embed into a Hilbert space (see  \cite[Theorem~1.3]{AT18}). More precisely, $\Gamma=\Z/2\Z\wr_G (H\times F)$ is a restricted wreath product, where
\begin{itemize}
\item $G$ is a finitely generated group which contains an expander graph isometrically in its Cayley graph (see \cite[Theorem~4]{Osaj14});
\item $H$ is a finitely generated group with the Haagerup property but it does not have property A (see \cite[Theorem~2]{Osaj14}) and there is a surjective homomorphism $H\twoheadrightarrow G$ (see \cite[Proposition~2.15]{AT18});
\item $F$ is a finitely generated free group with a surjective homomorphism $F\twoheadrightarrow G$;
\item $G$ is the $H\times F$-set, where $H$ acts by the left translation and $F$ acts by the right translation via their surjections onto $G$.
\end{itemize}

To the  best of our knowledge, it was unknown whether $\Gamma$ satisfies the   Baum-Connes conjecture with coefficients. We provide an affirmative answer here even though $\Gamma$ does not coarsely embed into a Hilbert space:

\begin{proposition} \label{PropGpNoCBC}
The group $\Gamma$  above satisfies the  Baum-Connes conjecture with coefficients. In particular, $|\Gamma|$ satisfies the coarse Baum-Connes conjecture with coefficients.
\end{proposition}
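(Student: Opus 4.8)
The plan is to present $\Gamma$ as an extension of ``a-T-menable-type'' groups and to combine the Higson--Kasparov theorem with the permanence of the Baum--Connes conjecture with coefficients under group extensions --- the same mechanism already used in Remark~\ref{RemGpNoCBC} for the group $\Lambda$. Write $\Gamma=\Z/2\Z\wr_G(H\times F)=N\rtimes Q$, where $N:=\bigoplus_{g\in G}\Z/2\Z$ and $Q:=H\times F$, so that there is a short exact sequence $1\to N\to\Gamma\to Q\to 1$ of countable discrete groups. The point worth stressing is that the index set $G$, over which the expander lives and where coarse embeddability into Hilbert space fails, enters only as the set indexing the wreath product and plays no role in the $K$-theoretic argument below.

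First I would record that $\Gamma$ is finitely generated: since the surjection $H\twoheadrightarrow G$ makes $H$ act transitively on $G$, the normal subgroup $N$ is generated by the $Q$-conjugates of the nontrivial element of the copy of $\Z/2\Z$ indexed by the identity of $G$, so a finite generating set of $Q$ together with that one element generates $\Gamma$. Hence $|\Gamma|$ is well defined up to coarse equivalence, and by the last item of Theorem~\ref{BCconjectures} (that is, \cite[Lemma~4.1]{SkandalisTuYu2002}) it is enough to prove that $\Gamma$ satisfies the Baum--Connes conjecture with coefficients.

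Next I would treat the kernel and the quotient separately. The kernel $N$ is a countable discrete abelian group, hence amenable, hence a-T-menable, so it satisfies the Baum--Connes conjecture with coefficients by \cite[Corollary~3.14]{MR1836047}. The quotient $Q=H\times F$ is a-T-menable as well: $F$ is a finitely generated free group, hence a-T-menable (free groups act properly on trees), $H$ has the Haagerup property by hypothesis, and a direct product of two a-T-menable groups is a-T-menable (take the diagonal action on the orthogonal direct sum of the two Hilbert spaces, which is again proper and affine isometric); thus $Q$ also satisfies the Baum--Connes conjecture with coefficients by \cite[Corollary~3.14]{MR1836047}. Finally, since $N$ and $Q$ both satisfy the Baum--Connes conjecture with coefficients, the permanence of this property under extensions of second countable groups \cite[Theorem~1.1]{MR1821144}, applied to $1\to N\to\Gamma\to Q\to 1$, gives that $\Gamma$ satisfies the Baum--Connes conjecture with coefficients; the ``in particular'' then follows from the last item of Theorem~\ref{BCconjectures}.

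I do not anticipate a genuine obstacle, since the proof is a patchwork of established permanence theorems; the two points that deserve attention are the verification that $\Gamma$ is finitely generated (so that the reduction to the coarse Baum--Connes conjecture for $|\Gamma|$ is legitimate) and the check that the extension permanence result applies --- which it does, because all groups in play are countable discrete, hence second countable, and closed subgroups of a-T-menable groups remain a-T-menable, so any auxiliary conditions on subgroups of $Q$ required by \cite[Theorem~1.1]{MR1821144} hold automatically.
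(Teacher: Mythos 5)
Your decomposition $1\to N\to\Gamma\to H\times F\to 1$ with $N=\bigoplus_{G}\Z/2\Z$ is viable, but the key step, as you state it, rests on a misattributed citation and on a permanence principle that is not a known theorem. \cite[Theorem~1.1]{MR1821144} (Higson--Kasparov) is not an extension result at all: it says that a-T-menable groups satisfy the Baum--Connes conjecture with coefficients. The statement you actually invoke --- ``if the kernel and the quotient satisfy the conjecture with coefficients, then so does the extension'' --- is not available in that generality. The extension results that do exist (Chabert--Echterhoff \cite[Corollary~3.14]{MR1836047}, Oyono-Oyono \cite{MR1817505}) require, besides the quotient, that the preimages in $\Gamma$ of all \emph{finite} subgroups of the quotient satisfy the conjecture with coefficients; your closing remark that closed subgroups of a-T-menable groups are a-T-menable concerns subgroups of $Q=H\times F$, not these preimages, so it does not verify the hypothesis that is actually needed.

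The gap is easy to close in your setting, after which your argument is essentially the paper's (and the one used for $\Lambda$ in Remark~\ref{RemGpNoCBC}): the kernel is abelian and $H\times F$ has the Haagerup property, so $\Gamma$ is an extension of two Haagerup groups; the preimage of a finite subgroup $F'\le H\times F$ is $N\rtimes F'$, which is abelian-by-finite, hence amenable, hence a-T-menable, hence satisfies the conjecture with coefficients by \cite[Theorem~1.1]{MR1821144}, and then \cite[Corollary~3.14]{MR1836047} applies. The paper proceeds in two steps instead: it writes $\Gamma=(\Z/2\Z\wr_G H)\rtimes F$, treats the wreath product as a Haagerup-by-Haagerup extension with the same two citations, and then deals with the quotient $F$ via \cite[Theorem~7.1]{MR1817505}, using that $F$ is torsion-free so that no finite-subgroup hypothesis needs checking; your one-step version is fine once the correct extension theorem, with its finite-subgroup hypothesis, is quoted and verified. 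Your verification that $\Gamma$ is finitely generated and the reduction to $|\Gamma|$ via Theorem~\ref{BCconjectures} are correct.
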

\begin{proof}
Since the $H$-action and the $F$-action commute, we have that $\Gamma=(\Z/2\Z\wr_G H)\rtimes F$. Observe that $\Z/2\Z\wr_G H$ satisfies the   Baum-Connes conjecture with coefficients as it is an extension of two groups with the Haagerup property (see \cite[Corollary~3.14]{MR1836047} and \cite[Theorem~1.1]{MR1821144}).

Since the free group $F$ is torsion-free and has the Haagerup property, we conclude that $\Gamma$ satisfies the   Baum-Connes conjecture with coefficients as well (see \cite[Theorem~7.1]{MR1817505}). The last statement follows from Theorem~\ref{BCconjectures}\eqref{ItemBCconjectures.gpBC}.
\end{proof}

\section{Rigidity of  homomorphisms}\label{SectionRigidityHomo}

 In this section, we show that, under our main geometric condition, strongly continuous compact preserving $*$-homomorphisms between Roe-like algebras are rigid (see Corollary \ref{CorHomoRigid}). 
 
 \begin{quote} From now on, in order to simplify statements, every time metric spaces $X$ and $Y$ are mentioned,   infinite dimensional separable Hilbert spaces $H_X$ and $H_Y$  over which the Roe algebras $\cstr(X)$ and $\cstr(Y)$ are defined will be automatically implicitly considered.
 \end{quote}
 
 The next result is a version of \cite[Proposition 4.1]{BragaFarahVignati2018} for Roe-like algebras, and it is our main tool in order to prove rigidity of such $*$-homomorphisms.

\begin{lemma}\label{LemmaFinRankProjInBDelta}
Let  $Y$ be a  u.l.f.  metric space  and assume that all sparse subspaces of $Y$ yield only compact ghost projections in their Roe algebras.     Let $(p_n)_n$ be an orthogonal  sequence of nonzero finite rank projections such that  $\text{SOT-}\sum_{n\in M }p_n\in \cstr(Y)$ for all $M\subset \N$. Then
\[ \inf_{n\in \N}\sup_{y\in Y,v\in B_{H_Y}}\|p_n\delta_y\otimes v\|>0.\]
\end{lemma}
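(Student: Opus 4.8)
The plan is to argue by contradiction. Suppose the infimum is zero; then along a subsequence we can find projections $p_{n_k}$ with $\sup_{y,v}\|p_{n_k}\delta_y\otimes v\|\to 0$. Relabelling, assume $\sup_{y\in Y,v\in B_{H_Y}}\|p_n\delta_y\otimes v\|\to 0$ for the whole sequence $(p_n)_n$. The idea is to use the hypothesis to produce a sparse subspace $Y'\subset Y$ such that the operator $p:=\text{SOT-}\sum_n p_n$ (which belongs to $\cstr(Y)$ by assumption with $M=\N$) restricts, after a small perturbation, to a ghost projection in $\cstr(Y')$ which is \emph{not} compact — contradicting the standing geometric hypothesis on $Y$.

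The key steps, in order, are as follows. First I would observe that each $p_n$ is finite rank, hence compactly supported modulo a small tail: there is a finite $F_n\subset Y$ and a vector-space perturbation so that $p_n$ is (approximately) supported on $F_n\times F_n$; more precisely, choosing $\eps_n\to 0$ summable, I can find finite $F_n\subset Y$ with $\|p_n-\chi_{F_n}p_n\chi_{F_n}\|<\eps_n$, using that $p_n\in\cstr(Y)$ has finite propagation up to $\eps_n$ and that $p_n$ is Hilbert–Schmidt. Second, since the $p_n$ are pairwise orthogonal and $\sum_{n\in M}p_n\in\cstr(Y)$ for \emph{all} $M$, a standard gliding-hump / diagonalization argument (as in \cite{BragaFarah2018,BragaFarahVignati2018}) lets me pass to an infinite subset $M\subset\N$ along which the supports $F_n$, $n\in M$, are ``spread out'': $d(F_n,F_m)\to\infty$ as $n+m\to\infty$ within $M$. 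Setting $Y':=\bigcup_{n\in M}F_n$, this $Y'$ is a sparse subspace of $Y$ in the sense of Definition \ref{DefiMainGeomProp}. Third, the compression $q:=\chi_{Y'}\,p\,\chi_{Y'}$, after absorbing the summable perturbations, is a projection in $\cstr(Y')$ equal to $\text{SOT-}\sum_{n\in M}p_n$ up to norm $\sum_n\eps_n$; cleaning it up to an honest projection $q'$ in $\cstr(Y')$ (via functional calculus, since it is close to a projection) keeps it within small norm. Fourth, $q'$ is a ghost: given $\delta>0$, the entries $(q')_{xy}$ with $x,y\in F_n$ have norm at most $\sup_{y,v}\|p_n\delta_y\otimes v\|+\eps_n<\delta$ for all but finitely many $n$, and entries straddling different $F_n$'s vanish (up to the perturbation) by the block structure; since $Y'$ is the disjoint union of the finite sets $F_n$, all but finitely many entries are $<\delta$. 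Finally, $q'$ is not compact: it is within $\sum\eps_n<1$ of an infinite orthogonal sum of nonzero projections, hence has infinite rank essentially, so it cannot be compact once the perturbation is small enough. This contradicts the hypothesis that all ghost projections in $\cstr(Y')$ are compact, completing the proof.

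The main obstacle I expect is the second step: arranging the supports $F_n$ to be genuinely sparse while simultaneously controlling the propagation of the resulting block-diagonal operator so that it actually lies in $\cstr(Y')$. One must interleave three approximations — finite propagation of each $p_n$, finite-rank (so Hilbert–Schmidt) approximation of the support by finite sets, and the gliding-hump selection of $M$ — and verify that the cumulative error is summable and that the limiting operator $\text{SOT-}\sum_{n\in M}p_n$ (which is a priori only known to be in $\cstr(Y)$) compresses to something of finite propagation on $Y'$. This is precisely where the hypothesis ``$\sum_{n\in M}p_n\in\cstr(Y)$ for all $M$'' is used, and the bookkeeping mirrors \cite[Proposition 4.1]{BragaFarahVignati2018}; the new point here is only that the entries $p_n$ are operators on $H_Y$ rather than scalars, which is harmless since the relevant norm is $\sup_{y,v}\|p_n\delta_y\otimes v\|$ and finite rank still gives Hilbert–Schmidt estimates on $\ell_2(Y,H_Y)$.
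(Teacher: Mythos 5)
Your proposal follows essentially the same route as the paper's proof: negate the conclusion, use the resulting smallness of the matrix entries of the $p_n$ to run a gliding-hump selection of far-apart finite approximate supports, assemble a sparse subspace $Y'$ and (via functional calculus) a projection in $\cstr(Y')$ that is a noncompact ghost, contradicting the hypothesis, with the ``for all $M$'' clause entering exactly where you say, namely to place the subsequence sum in $\cstr(Y)$. One point of emphasis: the spreading of the supports is driven by the contradiction assumption (for any finite $F\subset Y$ one has $\|\chi_F p_n\|\le |F|\sup_{y,v}\|p_n\delta_y\otimes v\|\to 0$), so the finite sets must be chosen inside the inductive selection rather than fixed in advance --- precisely the interleaving you flag at the end.
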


\begin{proof}
If this fails, by going to a subsequence, we assume that $\|p_n\delta_y\otimes v\|<2^{-n}$ for all $n\in \N$, all $y\in Y$ and   all $v\in B_{H_Y}$.

 %all unit vectors $v \in H_Y$.  

\begin{claim}\label{Claim1}
For each finite subset $F\subset Y$, $\lim_n\chi_Fp_n=0$.
\end{claim}
\begin{proof}
  Clearly, $\|\chi_Fp_n\|=\|p_n\chi_F\|=\sup_{\xi\in B_{\ell_2(F,H_Y)}} \|p_n\xi\|.$
Given any $\xi \in \ell_2(F,H_Y)$ with $\|\xi\|_2\leq 1$, we can always write $\xi=\sum_{y\in F}\delta_y\otimes v_y$ such that each $v_y$ belongs to $B_{H_Y}$. In particular, $\|p_n\xi\|\leq \sum_{y\in F}\|p_n\delta_y\otimes v_y\|< |F|\cdot 2^{-n}$. Hence, we conclude that $\|\chi_Fp_n\|<|F|\cdot 2^{-n}\to 0$, as desired.
\end{proof}
 The next claim should be compared with \cite[Claim 4.2]{BragaFarahVignati2018}. Let $\partial$ denote the metric of $Y$.

\begin{claim}\label{Claim2}
By going to a subsequence of $(p_n)_n$, there exists a sequence $(Y_n)_n$ of disjoint finite non-empty subsets of $Y$ and a sequence of finite rank projections $(q_n)_n$ in $\cstr(Y)$ such that 
\begin{enumerate}
\item $\partial(Y_k,Y_m)\to \infty$ as $k+m\to \infty$ and $k\neq m$, 
\item $\|p_n-q_n\|<2^{-n}$, and 
\item $q_n\in \cK(\ell_2(Y_n, H_Y))$, for all $n\in\N$.
\end{enumerate}
\end{claim} 
 
 \begin{proof}
We construct sequences  $(q_k)_k$, $(Y_k)_k$  and $(n_k)_k$ by induction as follows. Since $p_1$ has finite rank, pick  a finite rank projection $q_1\in \cstr(Y)$ with finite support 
  such that $\|p_1-q_1\|< 2^{-1}$ and set $n_1=1$.   Pick a finite   $Y_1\subseteq   Y$ so that $\supp(q_1)\subseteq   Y_1\times Y_1$. Fix $k>1$ and assume that $Y_j$, $n_j$ and $q_j$ have been defined for all $j\leq k-1$.

Let \[Z=\Big\{y\in Y\mid \partial \Big(y,\bigcup_{j\leq k-1} Y_j\Big)\leq k\Big\}.\] Since $Y$ is uniformly locally finite,   $Z$ is finite. 
By Claim \ref{Claim1}, for all large enough $m$ we have $\|\chi_Z p_m\| <2^{-k-2}$. 
Fix such $m$. For a sufficiently large finite $Y_k\subseteq Y\setminus Z$, the operator $a= \chi_{Y_k} p_m \chi_{Y_k}$
 is a positive contraction in $\cstr(Y)$ and it satisfies $\|a-p_m\|<2^{-k-1}$. Hence $\Sp(a)\subset [0,1/2)\cup(1/2,1]$, so the map $f\colon \Sp(a)\to \{0,1\}$ defined by $f(t)=0$ if $t<1/2$ and $f(t)=1$ if $t>1/2$ is continuous. By the continuous functional calculus, $q_k=f(a)$ is a projection and $\|q_k-a\|<2^{-k-1}$ (cf. \cite[Claim 4.2]{BragaFarahVignati2018}).  
Therefore $\|q_k-p_m\|<2^{-k}$ and $q_k\in \cst(a)\subseteq \cK(\ell_2(Y_k,H_Y))$. Hence,  $\supp(q_k)\subseteq Y_k\times Y_k$  as required. Let $n_k=m$. This completes the definition of  $(q_k)_k$, $(Y_k)_k$ and $(n_k)_k$. 
\end{proof}

Let $(Y_n)_n$ and $(q_n)_n$ be given by Claim \ref{Claim2}. Let $Y'=\bigsqcup_nY_n$, so $Y'$ is a sparse subspace of $Y$. Clearly, $\sum_nq_n$ converges in the strong operator topology to a noncompact   projection  $q\in \cB(\ell_2(Y',H_Y))$. By our choice of $(q_n)_n$, $\sum_n(q_n-p_n)$ converges in norm, so $\sum_n(q_n-p_n)\in \cstr(Y)$. Hence, $q= \sum_np_n+\sum_n(q_n-p_n)\in \cstr(Y)$, and as  $q\in \cB(\ell_2(Y',H_Y))$, we have $q\in \cstr(Y')$. 

The next claim is \cite[Claim 4 in proof of Theorem 6.1]{BragaFarah2018}, so we omit its proof.

\begin{claim}
$\sum_np_n$ is a ghost.\qed
\end{claim}

Since $\|p_n-q_n\|<2^{-n}$ for all $n$, this shows that  $q$ is also a ghost; contradiction.
\end{proof}

\begin{corollary}\label{CorHomoRigid}
Let $X$ and $Y$ be u.l.f. metric spaces  and assume that all sparse subspaces of $ Y$ yield only compact ghost projections in their Roe algebras. Let $A\subset \cstr(X)$  be a Roe-like \cstar-subalgebra.  Then every strongly continuous compact preserving $*$-homomorphism $\Phi: A\to \cstr(Y)$ is a rigid $*$-homomorphism. 
\end{corollary}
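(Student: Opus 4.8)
The plan is to apply Lemma~\ref{LemmaFinRankProjInBDelta} essentially as a black box: assume $\Phi$ is \emph{not} rigid, and manufacture from that assumption a sequence of projections in $\cstr(Y)$ meeting the hypotheses of that lemma, reaching a contradiction. The preliminary observation I would record is that everything needed lies in $A$ automatically: for $x\in X$ and a unit vector $u\in H_X$, the rank-one projection $e_{(x,u),(x,u)}$ has propagation $0$ with all matrix entries in $\cB(\C u)$, hence lies in $\csts(X)\subseteq A$; and for $x_0,x\in X$ the element $w=e_{(x_0,u),(x,u)}$ likewise lies in $\csts(X)\subseteq A$ and is a partial isometry with $w^*w=e_{(x_0,u),(x_0,u)}$ and $ww^*=e_{(x,u),(x,u)}$.

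First I would fix a single coordinate direction $u$ with good behaviour. Choose any $x_0\in X$. Since $\chi_{\{x_0\}}=e_{x_0x_0}$ is not compact, the compact-preserving hypothesis yields $\Phi(e_{x_0x_0})\neq0$. Writing $e_{x_0x_0}=\text{SOT-}\sum_k e_{(x_0,u_k),(x_0,u_k)}$ along an orthonormal basis $(u_k)_k$ of $H_X$ — a bounded SOT-limit of elements of $\csts(X)\subseteq A$ — strong continuity of $\Phi$ forces $\Phi(e_{(x_0,u_k),(x_0,u_k)})\neq0$ for some $k$; put $u=u_k$. For each $x\in X$ the identity $\Phi(w)^*\Phi(w)=\Phi(e_{(x_0,u),(x_0,u)})\neq0$ forces $\Phi(w)\neq0$ and hence $p_x:=\Phi(e_{(x,u),(x,u)})=\Phi(w)\Phi(w)^*\neq0$; in particular $\sup_{y\in Y,\,v\in B_{H_Y}}\|p_x\delta_y\otimes v\|>0$ for every $x$.

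Now I would run the contradiction. If $\Phi$ is not rigid, then applying the definition of rigidity to this $u$ and using the strict positivity just noted, I can pick pairwise distinct points $x_1,x_2,\dots\in X$ with $\sup_{y\in Y,\,v\in B_{H_Y}}\|p_{x_n}\delta_y\otimes v\|<2^{-n}$ for all $n$. Set $p_n:=p_{x_n}$. Then $(p_n)_n$ is a sequence of nonzero finite rank projections: they are nonzero by the previous paragraph, finite rank because they are $\Phi$-images of rank-one — hence compact — projections and $\Phi$ is compact preserving, and pairwise orthogonal because the $e_{(x_n,u),(x_n,u)}$ are and $\Phi$ is multiplicative. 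Moreover, for each $M\subseteq\N$ the projection $\text{SOT-}\sum_{n\in M}e_{(x_n,u),(x_n,u)}$ onto $\overline{\spann}\{\delta_{x_n}\otimes u:n\in M\}$ again has propagation $0$ with entries in $\cB(\C u)$, so it lies in $\csts(X)\subseteq A$, and applying $\Phi$ together with strong continuity gives $\text{SOT-}\sum_{n\in M}p_n\in\cstr(Y)$. Thus Lemma~\ref{LemmaFinRankProjInBDelta} applies to $(p_n)_n$, giving $\inf_n\sup_{y\in Y,\,v\in B_{H_Y}}\|p_n\delta_y\otimes v\|>0$ — which contradicts $\sup_{y,v}\|p_n\delta_y\otimes v\|<2^{-n}$. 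Therefore $\Phi$ is rigid, and the vector $u$ above together with any near-maximising assignment $x\mapsto(y_x,v_x)$ witnesses this.

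The real content is all in Lemma~\ref{LemmaFinRankProjInBDelta} (and it is there that the hypothesis on sparse subspaces of $Y$ is used); the one place here where something must actually be done is the second step, ruling out the degenerate possibility that $\Phi$ annihilates all the coordinate projections $e_{(x,u),(x,u)}$, and that is precisely where the compact-preserving and strong-continuity hypotheses are genuinely needed. Everything else is routine verification of the lemma's hypotheses.
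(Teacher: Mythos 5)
Your overall route is the same as the paper's: fix a unit vector $u$, set $p_x=\Phi(e_{(x,u),(x,u)})$, check the hypotheses of Lemma~\ref{LemmaFinRankProjInBDelta} (finite rank from compact preservation, orthogonality from multiplicativity, SOT-sums in $\cstr(Y)$ from strong continuity, membership of the coordinate projections and their SOT-sums in $\csts(X)\subset A$), and read the witnessing assignment off its conclusion; running this in contrapositive, as you do, changes nothing of substance, and all of those routine verifications are correct.

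The problem is the one step you add, the nondegeneracy claim. You assert that because $e_{x_0x_0}$ is not compact, compact preservation forces $\Phi(e_{x_0x_0})\neq 0$. That is a non sequitur: ``compact preserving'' only says that images of compact operators are compact and places no constraint on the image of a noncompact element. The zero homomorphism is strongly continuous, compact preserving, annihilates $e_{x_0x_0}$, and is not rigid --- so the corollary as literally stated fails for $\Phi=0$, and no argument from the stated hypotheses alone can yield the nonvanishing you want; some nondegeneracy must be assumed. (The paper's own proof silently skips this point: it fixes an arbitrary $u$ and never checks that the $p_x$ are nonzero, which is a hypothesis of Lemma~\ref{LemmaFinRankProjInBDelta}; in its applications $\Phi$ is an embedding, so the issue is invisible.) The correct repair, assuming only $\Phi\neq0$, is: $\ker\Phi\cap\cK(\ell_2(X,H_X))$ is a closed ideal of the simple algebra $\cK(\ell_2(X,H_X))\subset\csts(X)\subset A$, so if $\Phi$ killed a single $e_{(x,u),(x,u)}$ it would kill all compacts; but every $a\in A$ is a bounded SOT-limit of the compact operators $\chi_F a$ with $F\subset X$ finite (these are compact because $a$ is a norm limit of finite-propagation operators with compact entries and $X$ is u.l.f.), so strong continuity would then force $\Phi=0$. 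Hence for $\Phi\neq0$ every $p_x$ is nonzero for every unit vector $u$; with that in place your partial-isometry bookkeeping is no longer needed and the rest of your argument goes through.
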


\begin{proof}
Fix a unit vector $u\in H_X$ and for each $x\in X$, let $p_x=\Phi(e_{(x,u),(x,u)})$. Then each $p_x$ is a projection and, as $\Phi$ is compact preserving, each $p_x$ has finite rank. Moreover, $(p_x)_{x\in X}$ is an  orthogonal sequence. Since $\Phi$ is strongly continuous, $\text{SOT-}\sum_{n\in M}p_n\in \cstr(Y)$ for all $M\subset \N$. Let $x\in X\mapsto (y_x,v_x)\in Y\times B_{H_Y}$ be given by Lemma \ref{LemmaFinRankProjInBDelta}, i.e., an assignment satisfying  \[\inf_{x\in X}\|\Phi(e_{(x,u),(x,u)})\delta_{y_x}\otimes v_x\|> 0.\] This assignment witnesses that  $\Phi$ is a rigid $*$-homomorphism. 
\end{proof}

\section{Embeddings onto hereditary subalgebras and isomorphisms}\label{SectionEmbHereSubalg}
 
Notice that Theorem \ref{ThmMainEquivalences} is asymmetric, since it imposes a geometric condition only on one of the metric spaces. In order to obtain this asymmetric result, we start this section by studying   embeddings between Roe-like algebras onto hereditary subalgebras. After providing  results on embedding of hereditary subalgebras, we provide a proof for Theorem \ref{ThmMainEquivalences}.
 
The next lemma follows completely analogously to \cite[Lemma 6.1]{BragaFarahVignati2019}, so we omit its proof. 

\begin{lemma}\label{LemmaPhiStronglyContAndU}
Let $X$ and $Y$ be  metric spaces, $A\subset \cstr(X)$ and $B\subset \cstr(Y)$ be Roe-like  \cstar-subalgebras, and $\Phi\colon A\to B$ be an embedding onto a hereditary \cstar-subalgebra of $B$. Then 
\[
\Phi(\cK(\ell_2(X,H_X)))=\cK(\ell_2(Y,H_Y))\cap \Phi(A).
\]
Moreover, there exists an  isometry $U\colon \ell_2(X,H_X)\to \ell_2(Y,H_Y)$ such that $\Phi(a)=UaU^*$ for all $a\in A$. In particular,  $\Phi$ is strongly continuous and rank preserving. 
\end{lemma}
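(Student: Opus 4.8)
The statement to prove is Lemma \ref{LemmaPhiStronglyContAndU}: if $\Phi\colon A\to B$ is an embedding of a Roe-like subalgebra $A\subseteq\cstr(X)$ onto a hereditary \cstar-subalgebra of a Roe-like subalgebra $B\subseteq\cstr(Y)$, then $\Phi$ sends the compacts exactly onto $\cK(\ell_2(Y,H_Y))\cap\Phi(A)$, and moreover $\Phi$ is spatially implemented by an isometry $U\colon\ell_2(X,H_X)\to\ell_2(Y,H_Y)$. The proof follows \cite[Lemma 6.1]{BragaFarahVignati2019} essentially verbatim, so I will only indicate the skeleton. The first step is to identify $\cK(\ell_2(X,H_X))$ inside $A$: since $\csts(X)\subseteq A\subseteq\cstr(X)$, the ideal $\cK(\ell_2(X,H_X))$ is contained in $A$, and in fact it is the (unique) minimal nonzero closed two-sided ideal of $A$ — it is the closed ideal generated by any rank-one projection, and every nonzero closed ideal of a Roe-like algebra contains all such projections. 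A $*$-isomorphism onto a hereditary subalgebra preserves the ideal structure at the level of hereditary subalgebras, and the compacts of $B$ form the minimal ideal of $B$; intersecting with the image $\Phi(A)$ and using that $\Phi$ is an isomorphism onto $\Phi(A)$, one gets $\Phi(\cK(\ell_2(X,H_X)))=\cK(\ell_2(Y,H_Y))\cap\Phi(A)$.

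The second step builds the isometry $U$. Fix an orthonormal basis $(\xi_i)_i$ of $\ell_2(X,H_X)$ given by the vectors $\delta_x\otimes e$ ($x\in X$, $e$ ranging over an orthonormal basis of $H_X$), and consider the rank-one projections $e_{\xi_i}$ onto $\C\xi_i$; these lie in $\cK(\ell_2(X,H_X))\subseteq A$. By the first step, $\Phi(e_{\xi_i})$ is a rank-one projection in $\cK(\ell_2(Y,H_Y))$ — it is a projection in the compacts, and it is minimal among nonzero projections in $\Phi(A)$ because $e_{\xi_i}$ is minimal in $A$, hence rank one. Pick a unit vector $\eta_i$ spanning its range. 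Orthogonality of the $e_{\xi_i}$ (for distinct $i$, $e_{\xi_i}e_{\xi_j}=0$) passes through $\Phi$, so the $\eta_i$ are pairwise orthogonal; define $U\xi_i=\eta_i$ and extend linearly to an isometry $\ell_2(X,H_X)\to\ell_2(Y,H_Y)$. To check $\Phi(a)=UaU^*$ for all $a\in A$, it suffices to check it on the dense subset of $A$ and, by linearity and continuity, on ``matrix units'' $e_{\xi_i\xi_j}$ (the partial isometry $\xi_j\mapsto\xi_i$): one verifies $\Phi(e_{\xi_i\xi_j})$ is, up to a scalar of modulus one, the partial isometry $\eta_j\mapsto\eta_i$, by using that $\Phi(e_{\xi_i\xi_j})$ intertwines $\Phi(e_{\xi_i})$ and $\Phi(e_{\xi_j})$ and has the right range and domain projections; absorbing the phase into the choice of $\eta_i$ (fix $\eta_1$, then define $\eta_i$ via $\Phi(e_{\xi_1\xi_i})$) makes all phases trivial simultaneously. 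Then $UaU^*$ and $\Phi(a)$ agree on all $e_{\xi_i\xi_j}$, hence on $\cK(\ell_2(X,H_X))$, and finally on all of $A$ because for $a\in A$ and any $i,j$ one has $e_{\xi_i}\,a\,e_{\xi_j}\in\cK$, so $\Phi(e_{\xi_i})\Phi(a)\Phi(e_{\xi_j})=U e_{\xi_i}ae_{\xi_j}U^*=Ue_{\xi_i}U^*\,UaU^*\,Ue_{\xi_j}U^*$ — and since the $\Phi(e_{\xi_i})=Ue_{\xi_i}U^*$ form an approximate unit for $\Phi(A)$ (as $\Phi(A)$ is hereditary in $B$ and the $e_{\xi_i}$ sum strongly to $1$, so the $\Phi(e_{\xi_i})$ sum strongly to the range projection $UU^*$ which dominates $\Phi(A)$), cutting down and taking strong limits gives $\Phi(a)=UaU^*$. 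The ``in particular'' clauses are then immediate: $a\mapsto UaU^*$ is strongly continuous on bounded sets, and conjugation by an isometry preserves rank.

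\textbf{Main obstacle.} The one genuinely delicate point is the phase/coherence issue in step two: choosing the unit vectors $\eta_i$ spanning the ranges of the $\Phi(e_{\xi_i})$ in a \emph{compatible} way so that $U$ is well-defined as a single isometry and $\Phi$ is conjugation by it, rather than only agreeing up to a cocycle of phases on each matrix unit. This is handled, as above, by fixing $\eta_1$ arbitrarily and then \emph{defining} $\eta_i:=\Phi(e_{\xi_1\xi_i})^*\eta_1/\|\cdot\|$ (equivalently, transporting $\eta_1$ along the images of the matrix units from a single base index), after which all the compatibility relations hold by construction and the remaining verifications are routine \cstar-algebra computations; this is exactly why the paper can legitimately cite \cite[Lemma 6.1]{BragaFarahVignati2019} and omit the details.
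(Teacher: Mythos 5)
Your overall route --- the compacts form the minimal closed ideal of $A$, minimal projections go to rank-one projections, build $U$ out of matrix units with a coherent choice of phases, then extend from $\cK(\ell_2(X,H_X))$ to $A$ --- is the same one as in the argument the paper points to (\cite[Lemma 6.1]{BragaFarahVignati2019}; the paper itself omits the proof), and your phase device (transporting $\eta_1$ along the $\Phi(e_{\xi_1\xi_i})$) is correct. The genuine problem is the last step, which you dispose of in one clause: the claim that the $\Phi(e_{\xi_i})=Ue_{\xi_i}U^*$ ``form an approximate unit for $\Phi(A)$'', equivalently that $P=UU^*$ dominates $\Phi(A)$. That is exactly what has to be proved, and it does not follow from ``$\sum_ie_{\xi_i}\to 1$ strongly'': transporting this through $\Phi$ presupposes strong continuity of $\Phi$, which is itself one of the conclusions, so as written the argument is circular. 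Nor is it a formality: if $\pi$ is any representation of $A$ annihilating $\cK(\ell_2(X,H_X))$, then $a\mapsto a\oplus\pi(a)$ is an injective $*$-homomorphism that is spatial on the compacts with $P$ the first summand but is not $U\cdot U^*$; what rules this out is precisely hereditarity, which your sketch invokes by name at this point but never actually uses.

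The missing argument is short but must be supplied. Spatiality on compacts gives, for $a\in A$, $\Phi(a)\Phi(e_{\xi_i})=\Phi(ae_{\xi_i})=Uae_{\xi_i}U^*$, and taking SOT limits of finite sums yields $\Phi(a)P=UaU^*=P\Phi(a)$; the only issue is the corner $(1-P)\Phi(a)(1-P)$. To kill it, use hereditarity together with the first assertion of the lemma: for a rank-one projection $q=\zeta\zeta^*\in\cK(\ell_2(Y,H_Y))\subset B$ one has $\Phi(a)^*q\Phi(a)\in\Phi(A)B\Phi(A)\subset\Phi(A)$, and this operator is compact, hence lies in $\cK(\ell_2(Y,H_Y))\cap\Phi(A)=\Phi(\cK(\ell_2(X,H_X)))=U\cK(\ell_2(X,H_X))U^*$, which is supported in $P$; since $\Phi(a)^*q\Phi(a)=(\Phi(a)^*\zeta)(\Phi(a)^*\zeta)^*$, this forces $\Phi(a)^*\zeta\in\ran P$ for every $\zeta$, i.e.\ $\Phi(a)P=\Phi(a)$, whence $\Phi(a)=UaU^*$. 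Note this uses the equality $\Phi(\cK)=\cK\cap\Phi(A)$, so its reverse inclusion must come first; your ``ideal structure'' sentence for that is also too quick, and slightly circular, since your step two justifies rank-oneness of $\Phi(e_{\xi_i})$ by citing step one. The clean order is: rank-oneness directly (minimal in $\Phi(A)$, hence minimal in $B$ by hereditarity, hence rank one because $\cK(\ell_2(Y,H_Y))\subset B$), which gives $\Phi(\cK(\ell_2(X,H_X)))\subseteq\cK(\ell_2(Y,H_Y))$; then the reverse inclusion from the ideal correspondence for hereditary subalgebras, $J=\overline{BJB}\cap\Phi(A)$ applied to $J=\Phi(\cK(\ell_2(X,H_X)))$. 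By contrast, the phase issue you single out as the main obstacle is handled correctly and is not the delicate point.
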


The next lemma should be compared with \cite[Lemma 6.2]{BragaFarahVignati2019}.

\begin{lemma}\label{LemmaTheMapsAreExpanding} 
Suppose $X$ and $Y$ are u.l.f. metric spaces, and $A\subset \cstr(X)$ and $B\subset \cstr(Y)$ are Roe-like \cstar-subalgebras so that either $B=\csts(Y)$ or $\mathrm{UC}^*(Y)\subset B$. Let $\Phi\colon A\to B$ be a rigid embedding onto a hereditary \cstar-subalgebra of $B$, and  $x\in X\mapsto (y_x,v_x)\in Y\times B_{H_Y}$ be an assignment witnessing that $\Phi$ is a rigid $*$-homomorphism. Then the map $x\in X\mapsto y_x\in Y$ is  expanding.
\end{lemma}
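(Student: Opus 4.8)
The plan is to argue by contradiction, following the template of \cite[Lemma 6.2]{BragaFarahVignati2019}. Suppose the map $x \mapsto y_x$ is not expanding. Then there is some $r > 0$ and sequences $(x_n)_n$, $(x_n')_n$ in $X$ with $d(x_n, x_n') \to \infty$ while $\partial(y_{x_n}, y_{x_n'}) \le r$ for all $n$. Let $u \in B_{H_X}$ and $\delta > 0$ be the unit vector and constant witnessing that $\Phi$ is rigid, so that $\|\Phi(e_{(x,u),(x,u)})\delta_{y_x}\otimes v_x\| > \delta$ for all $x$. Write $p_x = \Phi(e_{(x,u),(x,u)})$; by Lemma \ref{LemmaPhiStronglyContAndU}, $\Phi$ is implemented by an isometry $U$, is strongly continuous and rank (hence compact) preserving, so each $p_x$ is a finite rank projection and these are pairwise orthogonal along any injective sequence of $x$'s.

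The key step is to exploit the finite propagation available on the $X$-side. Fix $s > 0$ large; since $d(x_n, x_n') \to \infty$, for all large $n$ the operator $e_{(x_n,u),(x_n,u)}$ and $e_{(x_n',u),(x_n',u)}$ are ``far apart'' in $X$. Now I would build, for a suitable subsequence, an operator $a \in A$ of finite propagation that ``connects'' infinitely many of the pairs: roughly $a = \text{SOT-}\sum_k e_{(x_{n_k},u),(x_{n_k}',u)}$ (plus adjoint), which lies in $\csts(X) \subset A$ because the pairs $(x_{n_k}, x_{n_k}')$ can be chosen so that the propagation contributions do not accumulate — actually here one must be careful: $d(x_n,x_n')\to\infty$ means this sum does \emph{not} have finite propagation, so instead one proceeds as in \cite{BragaFarahVignati2019}, passing to the coarse structure via the diagonal-type decomposition. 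The cleaner route: using that $\partial(y_{x_n},y_{x_n'})\le r$ and $Y$ is u.l.f., the points $y_{x_n}$ and $y_{x_n'}$ lie in an $r$-ball of each other, so $\Phi(e_{(x_n,u),(x_n',u)}) = U e_{(x_n,u),(x_n',u)} U^*$ has, for large $n$, nontrivial matrix coefficient between the (bounded-distance) points $y_{x_n}$ and $y_{x_n'}$; more precisely $\|p_{x_n'} \Phi(e_{(x_n,u),(x_n',u)}) p_{x_n} \delta_{y_{x_n}}\otimes v_{x_n}\|$ is bounded below by roughly $\delta^2$ by combining rigidity at $x_n$ and at $x_n'$ with the fact that $e_{(x_n,u),(x_n',u)}$ is a partial isometry carrying the range of $e_{(x_n',u),(x_n',u)}$-type vectors appropriately.

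From here the contradiction comes from propagation: by hypothesis $B = \csts(Y)$ or $\mathrm{UC}^*(Y) \subset B$, and any element of $A$ with finite propagation $t$ maps under $\Phi$ into $\cstr(Y)$; but an element of $A$ built from the $e_{(x_n,u),(x_n',u)}$ with $d(x_n,x_n')\to\infty$ would require propagation $\to\infty$, so one instead approximates: fix a single $t$-propagation contraction $b\in A$ that for infinitely many $k$ satisfies $e_{(x_{n_k}',u),(x_{n_k}',u)} b e_{(x_{n_k},u),(x_{n_k},u)}\neq 0$ — impossible once $d(x_{n_k},x_{n_k}')>t$. The way \cite{BragaFarahVignati2019} finesses this is to work on the target: $\Phi(b)$ has finite propagation on $Y$ (as $\Phi$ maps finite-propagation elements of the Roe-like algebra to $\cstr(Y)$, which for the stable/uniform algebra retains the finite-propagation approximation), while the lower bounds above force $\Phi(b)$ to have matrix coefficients of size $\ge \delta^2$ between $y_{x_{n_k}}$ and $y_{x_{n_k}'}$, which are at distance $\le r$ — this is consistent, so the actual contradiction must instead use that the \emph{preimage} side forces these coefficients to come from arbitrarily-far-apart $x$'s, violating $\supp(b)\subseteq \Delta_t$. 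I expect the main obstacle to be setting up this transfer correctly: making precise, via the isometry $U$ of Lemma \ref{LemmaPhiStronglyContAndU} and strong continuity, the claim that a suitable $\text{SOT}$-sum on the $X$-side lands in $A$ and has controlled propagation, and then extracting from $\partial(y_{x_n},y_{x_n'})\le r$ together with u.l.f.-ness of $Y$ a \emph{single} pair of points in $Y$ (after passing to a subsequence using pigeonhole on the finitely many possibilities within an $r$-ball and finite rank of the $p_x$) between which $\Phi(b)$ has a non-vanishing coefficient for infinitely many indices, contradicting $b$ having finite propagation on the $X$-side. Once that bookkeeping is in place, the rest is the same contradiction as in \cite[Lemma 6.2]{BragaFarahVignati2019}.
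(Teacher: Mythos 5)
Your plan sets up the right contradiction and even names the statement that would finish it: an operator $a\in A\subset \cstr(X)$ whose matrix coefficients between $x^1_n$ and $x^2_n$ stay bounded below while $d(x^1_n,x^2_n)\to\infty$, which is impossible for an element of the Roe algebra. But you never produce such an $a$, and you say so yourself (``the main obstacle to be setting up this transfer correctly''). The missing mechanism is precisely the hereditarity hypothesis, which your proposal states but never uses. In the paper's argument one passes to subsequences so that $(y^1_n)_n$ and $(y^2_n)_n$ consist of distinct points and so that the cross terms satisfy $\|e_{(y^1_n,v^1_n),(y^2_n,v^2_n)}\Phi(e_{(x^1_m,u),(x^1_m,u)})\|<2^{-n-m-1}\delta^2$ for $n\neq m$ (possible because $\Phi$ is rank preserving by Lemma \ref{LemmaPhiStronglyContAndU}, so the $\Phi(e_{(x^1_n,u),(x^1_n,u)})$ form an orthogonal sequence of rank one projections), and then forms $T=\text{SOT-}\sum_n e_{(y^1_n,v^1_n),(y^2_n,v^2_n)}$. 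Since $\partial(y^1_n,y^2_n)\leq r$, this sum has propagation at most $r$ on the $Y$-side, hence $T\in \mathrm{UC}^*(Y)\subset B$. Because $\Phi(A)$ is hereditary in $B$, we have $\Phi(1)T\Phi(1)=\Phi(a)$ for some $a\in A$, and the rigidity lower bound at both $x^1_n$ and $x^2_n$ together with the smallness of the cross terms yields $\|e_{x^2_nx^2_n}\,a\,e_{x^1_nx^1_n}\|\geq \delta^2/2$ for all $n$ (cf.\ \cite[Claim 6.4]{BragaFarahVignati2019}) --- the contradiction. Your alternative of pushing single rank-one operators $e_{(x_n,u),(x_n',u)}$ forward and reading off coefficients of $\Phi(b)$ cannot work, as you noticed (``this is consistent''): finite propagation of $\Phi(b)$ on $Y$ is perfectly compatible with $\partial(y^1_n,y^2_n)\leq r$. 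The transfer must go from $Y$ back to $X$, and heredity is what makes that possible.

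You also omit the case $B=\csts(Y)$, which needs a separate twist: the operator $T$ above need not lie in $\csts(Y)$ unless the vectors $v_x$ are confined to a fixed finite-dimensional subspace of $H_Y$. The paper first replaces $v_x$ by $wv_x$ for a single finite rank projection $w\in\cB(H_Y)$ with $\inf_{x\in X}\|\Phi(e_{(x,u),(x,u)})\delta_{y_x}\otimes wv_x\|>0$ (essentially \cite[Lemma 6.4]{SpakulaWillett2013AdvMath}) and then runs the same argument, so that the SOT-sum lands in $\csts(Y)$.
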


\begin{proof}
Let $d$ and $\partial $ be the metrics of $X$ and $Y$ respectively. First assume $\mathrm{UC}^*(Y)\subset B$. Fix a unit vector $u\in H_X$ and $\delta>0$ such that \[\|\Phi(e_{(x,u),(x,u)})\delta_{y_x}\otimes v_x\|\geq \delta\] for all $x\in X$.  Suppose $x\in X\mapsto y_x\in Y$ is not expanding. Then there exist 
 $r>0$,  and sequences $(x^1_n)_n$ and $(x^2_n)_n$ in $X$ such that $d(x_n^1,x_n^2)\geq n$ and $\partial(y_n^1,y_n^2)\leq r$ for all $n\in\N$, where $y^1_n=y_{x^1_n}$ and $y^2_n=y_{x^2_n}$ for all $n\in\N$. To simplify notation, we also  let $v^1_n=v_{x^1_n}$ and $v^2_n=v_{x^2_n}$ for all $n\in\N$.

Since $d(x_n^1,x_n^2)\geq n$ for all $n\in\N$, by going to a subsequence, we can assume that either $(x^1_n)_n$ or $(x^2_n)_n$ is a   sequence of distinct elements. Without loss of generality, assume that this is the case for   $(x^1_n)_n$.  Moreover, going to a subsequence, we can assume that both $(y^1_n)_n$ and $(y^2_n)_n$ are sequences of distinct elements (cf. \cite[Claim 6.3]{BragaFarahVignati2019}).
 
By Lemma \ref{LemmaPhiStronglyContAndU}, $\Phi$ is rank preserving, so $(\Phi(e_{(x^1_n,u),(x^1_n,u)}))_n$ is an orthogonal sequence of   rank 1 projections. Hence, by going to a further subsequence, assume that 
\[\|e_{(y^1_n,v^1_n),(y^2_n,v^2_n)}\Phi(e_{(x^1_n,u),(x^1_n,u)})\|<  2^{-n-m-1}{\delta^2}\]
for all $n\neq m$. 

 Since $(y^1_n)_n$ and $(y^2_n)_n$ are sequences of distinct elements and $\partial(y^1_n,y^2_n)\leq r$ for all $n\in\N$, $\sum_{n\in\N}e_{(y^1_n,v^1_n),(y^2_n,v^2_n)}$ converges in the strong operator topology to an element in $\mathrm{UC}^*(Y)$, and hence, in $B$. As $\Phi(A)$ is a hereditary subalgebra of $B$, there exists $a\in A$ such that 
\[\Phi(a)=\Phi(1)\Big(\sum_{n\in\N}  e_{(y^1_n,v^1_n),(y^2_n,v^2_n)}\Big)\Phi(1).\]
Analogously as \cite[Claim 6.4]{BragaFarahVignati2019}, we have that
\[\inf_n\|e_{(x^2_n,v^2_n),(x^2_n,v^2_n)}ae_{(x^1_n,v^1_n),(x^1_n,v^1_n)}\|\geq \delta^2/2.\] Since $a\in \cstr(X)$ and $\lim_nd(x^1_n,x^2_n)=\infty$, this gives us a contradiction.

Now assume $B=\csts(Y)$. The next claim is essentially \cite[Lemma 6.4]{SpakulaWillett2013AdvMath}, so we omit its proof.

\begin{claim}
There exists a  finite rank projection $w$ on $\cB(H_Y)$  such that 
\[\inf_{x\in X}\|\Phi(e_{(x,u),(x,u)})\delta_{y_x}\otimes wv_x\|>0.\]\qed
\end{claim}

The rest of the proof is just a matter of repeating the proof for the previous case but  with this new assignment $x\in X\mapsto(y_x,wv_x)\in Y\times H_Y$. Proceeding with this strategy, since $w$ has finite rank, we can guarantee that  $\sum_{n\in\N}e_{(y^1_n,wv^1_n),(y^2_n,wv^2_n)}$ converges in the strong operator topology to an element in $\csts(Y)$, and the proof works verbatim.
\end{proof}

\begin{lemma}\label{LemmaTheMapsAreCoarse} 
Suppose $X$ and $Y$ are u.l.f. metric spaces, and $A\subset \cstr(X)$ and $B\subset \cstr(Y)$ are Roe-like \cstar-subalgebras so that either $B=\csts(Y)$ or $\mathrm{UC}^*(Y)\subset B$. Let $\Phi\colon A\to B$ be a rigid embedding onto a hereditary \cstar-subalgebra of $B$, and  $x\in X\mapsto (y_x,v_x)\in Y\times B_{H_Y}$ be an assignment witnessing that $\Phi$ is a rigid $*$-homomorphism. Then the map $x\in X\mapsto y_x\in Y$ is coarse.
\end{lemma}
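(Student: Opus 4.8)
I want to show that if $x\mapsto y_x$ fails to be coarse, then I can build an element of $A$ that contradicts $A\subset\cstr(X)$, exactly dual to the obstruction exploited in the proof of Lemma~\ref{LemmaTheMapsAreExpanding}. So suppose $x\in X\mapsto y_x$ is not coarse. Then there is some $s>0$ and sequences $(x^1_n)_n$, $(x^2_n)_n$ in $X$ with $d(x^1_n,x^2_n)\le s$ for all $n$, but $\partial(y^1_n,y^2_n)\to\infty$, where $y^i_n=y_{x^i_n}$, $v^i_n=v_{x^i_n}$. The first routine step is to pass to subsequences so that both $(y^1_n)_n$ and $(y^2_n)_n$ consist of distinct elements (possible since $\partial(y^1_n,y^2_n)\to\infty$ forces infinitely many distinct values among at least one of them, and local finiteness together with $d(x^1_n,x^2_n)\le s$ then forces the other as well, after relabelling). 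As in the previous lemma, fix the unit vector $u\in H_X$ and $\delta>0$ with $\|\Phi(e_{(x,u),(x,u)})\delta_{y_x}\otimes v_x\|\ge\delta$ for all $x$.

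**The source operator.** Since $d(x^1_n,x^2_n)\le s$ for all $n$ and $X$ is u.l.f., the operator $a_0:=\text{SOT-}\sum_n e_{(x^1_n,u),(x^2_n,u)}$ converges strongly and lies in $\mathrm{UC}^*(X)$ — in fact it has propagation $\le s$ and rank-one matrix entries — hence in $A$ (recall $A$ is Roe-like, so $\csts(X)\subset A$, and $\csts(X)$ already contains this operator). Now apply $\Phi$. By Lemma~\ref{LemmaPhiStronglyContAndU}, $\Phi=\Ad(U)$ for an isometry $U$, so $\Phi$ is strongly continuous and rank preserving, giving $\Phi(a_0)=\text{SOT-}\sum_n \Phi(e_{(x^1_n,u),(x^2_n,u)})\in B\subset\cstr(Y)$. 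The key estimate, following \cite[Claim 6.4]{BragaFarahVignati2019} verbatim, is that $\|e_{(y^1_n,v^1_n),(y^1_n,v^1_n)}\,\Phi(e_{(x^1_n,u),(x^2_n,u)})\,e_{(y^2_n,v^2_n),(y^2_n,v^2_n)}\|\ge\delta^2$ for each $n$: this comes from sandwiching $\Phi(e_{(x^1_n,u),(x^2_n,u)})=\Phi(e_{(x^1_n,u),(x^1_n,u)})\Phi(e_{(x^1_n,u),(x^2_n,u)})\Phi(e_{(x^2_n,u),(x^2_n,u)})$ and using that $\Phi$ is a $*$-homomorphism together with the rigidity inequality on both sides. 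After passing to a further subsequence using orthogonality of the rank-one projections $(\Phi(e_{(x^i_n,u),(x^i_n,u)}))_n$ for $i=1,2$, the off-diagonal contributions are summable, so $\|e_{(y^1_n,v^1_n),(y^1_n,v^1_n)}\,\Phi(a_0)\,e_{(y^2_n,v^2_n),(y^2_n,v^2_n)}\|\ge\delta^2/2$ for all $n$.

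**The contradiction.** Now $\Phi(a_0)\in\cstr(Y)$ has some finite propagation (after approximating $\Phi(a_0)$ within $\delta^2/4$ by a finite-propagation operator, say of propagation $R$), yet the matrix entry $(\Phi(a_0))_{y^2_n,y^1_n}$ has norm bounded below by a positive constant for all $n$ while $\partial(y^1_n,y^2_n)\to\infty$. This is impossible. More carefully: choose $b\in\cstr(Y)$ with finite propagation $R$ and $\|\Phi(a_0)-b\|<\delta^2/4$; then for $n$ with $\partial(y^1_n,y^2_n)>R$ we get $b_{y^2_ny^1_n}=0$, so the norm of the compression of $\Phi(a_0)$ between the corresponding one-point (tensor one-dimensional) corners is $<\delta^2/4$, contradicting the lower bound $\delta^2/2$.

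**The $B=\csts(Y)$ case.** Exactly as in Lemma~\ref{LemmaTheMapsAreExpanding}, when $B=\csts(Y)$ one first replaces the assignment $x\mapsto(y_x,v_x)$ by $x\mapsto(y_x,wv_x)$ for a suitable finite rank projection $w$ on $\cB(H_Y)$ (the cited claim, essentially \cite[Lemma 6.4]{SpakulaWillett2013AdvMath}, produces such a $w$ with $\inf_x\|\Phi(e_{(x,u),(x,u)})\delta_{y_x}\otimes wv_x\|>0$), so that all the operators $\sum_n e_{(y^i_n,wv^i_n),\cdot}$ that appear in compressing $\Phi(a_0)$ stay inside $\csts(Y)$; the argument then runs verbatim. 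The only thing to keep an eye on — and what I regard as the main (minor) obstacle — is making sure the subsequence extraction is done in the right order so that both the distinctness of the $y^i_n$'s and the summability of the off-diagonal terms hold simultaneously; this is bookkeeping of the same kind as in the proof of the previous lemma, and no new idea is needed.
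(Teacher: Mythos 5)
Your argument is correct in substance, but it takes a different route from the paper's, whose proof is essentially a citation: by Lemma \ref{LemmaPhiStronglyContAndU} one has $\Phi=\Ad(U)$ for an isometry $U$, the rigidity hypothesis becomes $|\langle U\delta_x\otimes u,\delta_{y_x}\otimes v_x\rangle|>\delta$ for all $x\in X$, and coarseness of $x\mapsto y_x$ is then exactly \cite[Lemma 4.5(2)]{SpakulaWillett2013AdvMath} (case $\mathrm{UC}^*(Y)\subset B$) or \cite[Lemma 6.5(2)]{SpakulaWillett2013AdvMath} (case $B=\csts(Y)$), together with the remark that the unitary there may be replaced by an isometry. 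What you write out is, in effect, a proof of the content of those lemmas: push the bounded-propagation partial translation $a_0=\text{SOT-}\sum_n e_{(x^1_n,u),(x^2_n,u)}\in\csts(X)\subset A$ forward and contradict finite-propagation approximability of $\Phi(a_0)\in\cstr(Y)$. A mild bonus of your route is that the case hypothesis on $B$, the hereditarity of $\Phi(A)$, and the finite-rank projection $w$ in your last paragraph are all unnecessary here: unlike in Lemma \ref{LemmaTheMapsAreExpanding}, nothing has to be pulled back into $\Phi(A)$, and only $B\subset\cstr(Y)$ is used.

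Two repairs are needed. Cosmetically, with the paper's convention ($e_{(x,v),(y,u)}$ sends $\delta_x\otimes v$ to $\delta_y\otimes u$) the corner that is large is $e_{(y^2_n,v^2_n),(y^2_n,v^2_n)}\,\Phi(e_{(x^1_n,u),(x^2_n,u)})\,e_{(y^1_n,v^1_n),(y^1_n,v^1_n)}$; the corner you wrote is the transposed one, whose norm is not bounded below by rigidity, so the two compressions should be swapped (the contradiction is unaffected since $\partial(y^1_n,y^2_n)=\partial(y^2_n,y^1_n)$). More substantively, your justification of the ``routine first step'' is a non sequitur: uniform local finiteness of $X$ together with $d(x^1_n,x^2_n)\le s$ does not rule out, say, $y^1_n$ being constant while $y^2_n$ escapes to infinity, and this distinctness is genuinely used in your proof --- it is what makes the vectors $\delta_{y^i_n}\otimes v^i_n$ weakly null, which controls the cross terms with $m<n$ in your off-diagonal estimate (orthogonality of the projections $\Phi(e_{(x^i_m,u),(x^i_m,u)})$ only handles, for fixed $n$, the terms with $m>n$). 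The repair is short: first arrange the $x^i_n$ to be distinct (otherwise, by u.l.f.-ness of $X$, the pairs $(x^1_n,x^2_n)$ would repeat along a subsequence and $\partial(y^1_n,y^2_n)$ would be bounded there); then Lemma \ref{LemmaTheMapsAreExpanding}, whose hypotheses are contained in the present ones, shows $x\mapsto y_x$ is expanding, hence has fibers of uniformly bounded diameter and so is finite-to-one, so each of $(y^1_n)_n$ and $(y^2_n)_n$ takes every value only finitely often and a further subsequence makes both injective. (Alternatively, argue as in \cite[Claim 6.3]{BragaFarahVignati2019}.) With these fixes your proof is complete.
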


\begin{proof}
By Lemma \ref{LemmaPhiStronglyContAndU}, there exists an isometry $U:\ell_2(X,H_X)\to \ell_2(Y,H_Y)$ such that $\Phi(a)=UaU^*$ for all $a\in A$. Hence, since  $x\in X\mapsto (y_x,v_x)\in Y\times B_{H_Y}$ witnesses that $\Phi$ is a rigid $*$-homomorphism, there exists $\delta>0$ and a unit vector $u\in H_X$ such that $\|\Phi(e_{(x,u),(x,u)})\delta_{y_x}\otimes v_x\|>\delta$. In other words, 
\[|\langle U\delta_{x}\otimes u,\delta_{y_x}\otimes v_x\rangle|> \delta\]
for all $x\in X$. 

If $B=\csts(Y)$, it follows from  \cite[Lemma 6.5(2)]{SpakulaWillett2013AdvMath} that  $x\in X\mapsto y_x\in Y$ is coarse, while if $\mathrm{UC}^*(Y)\subset B$, it follows from  \cite[Lemma 4.5(2)]{SpakulaWillett2013AdvMath} that  $x\in X\mapsto y_x\in Y$ is coarse.\footnote{Notice that in  \cite{SpakulaWillett2013AdvMath} the  map $U$ is unitary. However, this is not necessary and the same proof holds for $U$ being an isometry.}
\end{proof}

\begin{theorem}\label{ThmEmbHerSubAlg}
Let $X$ and $Y$ be u.l.f. metric spaces and  assume that all sparse subspaces of $Y$ yield only compact ghost projections in their Roe algebras. Let $A\subset \cstr(X)$ and $B\subset \cstr(Y)$ be Roe-like \cstar-algebras such that either $B=\csts(Y)$ or $\mathrm{UC}^*(Y)\subset B$.  If $A$  embeds onto a hereditary \cstar-subalgebra of $B$, then $X$ coarsely embeds into $Y$.
\end{theorem}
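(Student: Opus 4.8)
The plan is to extract a candidate map $X\to Y$ from the embedding $\Phi\colon A\to B$ and verify it is a coarse embedding by combining the three lemmas just proved. First I would invoke Lemma~\ref{LemmaPhiStronglyContAndU} to get an isometry $U\colon\ell_2(X,H_X)\to\ell_2(Y,H_Y)$ with $\Phi(a)=UaU^*$ for all $a\in A$; in particular $\Phi$ is strongly continuous, rank preserving, and compact preserving (it sends $\cK(\ell_2(X,H_X))$ into $\cK(\ell_2(Y,H_Y))$). Since all sparse subspaces of $Y$ yield only compact ghost projections, Corollary~\ref{CorHomoRigid} applies: $\Phi$ is a rigid $*$-homomorphism. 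Thus there is a unit vector $u\in H_X$, a constant $\delta>0$, and an assignment $x\mapsto(y_x,v_x)\in Y\times B_{H_Y}$ with $\|\Phi(e_{(x,u),(x,u)})\delta_{y_x}\otimes v_x\|>\delta$ for all $x\in X$; this is exactly the data feeding Lemmas~\ref{LemmaTheMapsAreExpanding} and \ref{LemmaTheMapsAreCoarse}.

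Next I would define $f\colon X\to Y$ by $f(x)=y_x$. By Lemma~\ref{LemmaTheMapsAreCoarse}, $f$ is coarse, and by Lemma~\ref{LemmaTheMapsAreExpanding}, $f$ is expanding (here the hypothesis $B=\csts(Y)$ or $\mathrm{UC}^*(Y)\subset B$ is used, exactly as in those lemmas). A coarse, expanding map is by definition a coarse embedding, so $X$ coarsely embeds into $Y$, which is the conclusion. The only point needing a word of care is that the hypotheses of Corollary~\ref{CorHomoRigid} and of Lemmas~\ref{LemmaPhiStronglyContAndU}--\ref{LemmaTheMapsAreCoarse} are genuinely met: Lemma~\ref{LemmaPhiStronglyContAndU} guarantees strong continuity and that compacts go to compacts, so Corollary~\ref{CorHomoRigid} is applicable to the embedding $\Phi$, and the ``hereditary image'' hypothesis is carried verbatim into the two geometric lemmas.

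I do not expect a serious obstacle: all the real work has been front-loaded into Lemma~\ref{LemmaFinRankProjInBDelta} (and hence Corollary~\ref{CorHomoRigid}) on the operator-algebraic side, and into Lemmas~\ref{LemmaTheMapsAreExpanding}--\ref{LemmaTheMapsAreCoarse} on the coarse-geometry side. The proof of the theorem is therefore essentially a bookkeeping assembly of these pieces. If anything deserves emphasis, it is that the asymmetry of the statement — only $Y$ carries the ghost-projection condition — is precisely what makes Corollary~\ref{CorHomoRigid} usable here: rigidity of $\Phi$ needs the condition on the \emph{target} $Y$, and then Lemmas~\ref{LemmaTheMapsAreExpanding} and \ref{LemmaTheMapsAreCoarse} turn that single rigidity statement about $\Phi$ into the two halves (coarse and expanding) of a coarse embedding $X\hookrightarrow Y$.
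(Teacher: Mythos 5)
Your proposal is correct and follows essentially the same argument as the paper's proof: apply Lemma~\ref{LemmaPhiStronglyContAndU} to get strong continuity and compact preservation, Corollary~\ref{CorHomoRigid} (using the ghost-projection hypothesis on $Y$) to get rigidity of $\Phi$, and then Lemmas~\ref{LemmaTheMapsAreExpanding} and \ref{LemmaTheMapsAreCoarse} to show $x\mapsto y_x$ is expanding and coarse, hence a coarse embedding. No gaps; this is exactly the bookkeeping assembly the paper carries out.
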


\begin{proof}
Let $\Phi:A\to B$ be an embedding onto a hereditary subalgebra of $B$. By Lemma \ref{LemmaPhiStronglyContAndU}, $\Phi$ is strongly continuous and compact preserving.  By Corollary \ref{CorHomoRigid}, $\Phi$ is a rigid $*$-homomorphism. Let $x\in X\mapsto(y_x,v_x)\in Y\times B_{H_Y}$ be an assignment which witnesses that $\Phi$ is rigid. Define $f:X\to Y$ by $f(x)=y_x$ for all $x\in X$. By Lemma \ref{LemmaTheMapsAreExpanding}, $f$ is expanding and by  Lemma \ref{LemmaTheMapsAreCoarse}, $f$ is coarse. So $f$ is a coarse embedding. 
\end{proof}

\begin{corollary}\label{Cor}
Let $X$ and $Y$ be u.l.f. metric spaces and  assume that all sparse subspaces of $Y$ yield only compact ghost projections in their Roe algebras. Let $A\subset \cstr(X)$ and $B\subset \cstr(Y)$ be Roe-like \cstar-algebras such that either $B=\csts(Y)$ or $\mathrm{UC}^*(Y)\subset B$. If $A$  embeds onto a hereditary \cstar-subalgebra of $B$,  then all sparse subspaces of $ X$ yield only compact ghost projections in their Roe algebras. 
\end{corollary}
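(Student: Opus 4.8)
The plan is to reduce Corollary \ref{Cor} to Theorem \ref{ThmEmbHerSubAlg}. Suppose $X' \subseteq X$ is a sparse subspace, say $X' = \bigsqcup_n X'_n$ witnessing sparseness, and let $p \in \cstr(X')$ be a ghost projection; we must show $p$ is compact. The key observation is that $\cstr(X')$ sits inside $\cstr(X)$ as a hereditary (indeed corner) \cstar-subalgebra via the projection $\chi_{X'}$, namely $\cstr(X') = \chi_{X'}\cstr(X)\chi_{X'}$, where $\chi_{X'}$ is viewed as a projection in the multiplier algebra acting on $\ell_2(X, H_X)$. So I first want to transport the hypothesis ``$A$ embeds onto a hereditary subalgebra of $B$'' to the smaller algebra: precompose with the inclusion $\chi_{X'} A \chi_{X'} \hookrightarrow A$.

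Concretely, here is the order of steps I would carry out. First, let $\Phi \colon A \to B$ be the embedding onto a hereditary \cstar-subalgebra. Since $\csts(X) \subseteq A$, the corner $\chi_{X'}\csts(X)\chi_{X'} = \csts(X')$ is contained in $\chi_{X'}A\chi_{X'}$, so $\chi_{X'}A\chi_{X'}$ is itself a Roe-like subalgebra of $\cstr(X')$ — wait, more carefully, it is a Roe-like subalgebra of $\cstr(X')$ only if it contains $\csts(X')$, which it does, and is contained in $\cstr(X')$, which it is since $\chi_{X'}\cstr(X)\chi_{X'} = \cstr(X')$. Second, the restriction $\Phi \rs (\chi_{X'}A\chi_{X'})$ is still an injective $*$-homomorphism, and its image is $\Phi(\chi_{X'})\Phi(A)\Phi(\chi_{X'})$; since $\Phi(A)$ is hereditary in $B$ and $\Phi(\chi_{X'})$ is a projection in (the multiplier algebra of) $\Phi(A)$, this image is a corner of $\Phi(A)$, hence hereditary in $\Phi(A)$, hence hereditary in $B$. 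Thus $\chi_{X'}A\chi_{X'}$ — a Roe-like subalgebra of $\cstr(X')$ — embeds onto a hereditary \cstar-subalgebra of $B$. Third, apply Theorem \ref{ThmEmbHerSubAlg} with $X'$ in place of $X$ (the hypothesis on $Y$ is unchanged) to conclude $X'$ coarsely embeds into $Y$.

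Finally I would close the loop: a coarse embedding of $X'$ into $Y$ means that all sparse subspaces of $X'$ also coarsely embed into $Y$, but more directly, I should use the known implication that coarse embeddability of a sparse space into a Hilbert space — no, that is not available here. Instead the cleaner route: $X'$ being sparse and coarsely embeddable into $Y$ does not by itself give compactness of ghost projections in $\cstr(X')$ unless $Y$ has the relevant property. Hmm — so actually I suspect the correct formulation is to apply Theorem \ref{ThmEmbHerSubAlg} not with $X'$ alone but to observe that \emph{any} sparse subspace $X'' \subseteq X'$ likewise embeds (via its corner) onto a hereditary subalgebra of $B$, hence coarsely embeds into $Y$; then since all sparse subspaces of $Y$ yield only compact ghost projections, and $X''$ embeds coarsely — and in fact bijectively coarsely onto its image which is a sparse subspace of $Y$ — one transfers compactness of ghost projections back. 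The main obstacle, and the step I would think hardest about, is precisely this last transfer: showing that if a sparse space $X'$ coarsely embeds into $Y$ (with the ghost-projection property), then $\cstr(X')$ has only compact ghost projections. This should follow because a coarse embedding of a sparse space can be upgraded to a bijective coarse equivalence onto a sparse subspace of $Y$ (using uniform local finiteness to spread out), inducing an isomorphism of Roe algebras that preserves ghostness and compactness; but pinning that down carefully is where the real work lies, and I would want to check whether the paper instead simply invokes that ghosts and their compactness are coarse invariants for sparse spaces.
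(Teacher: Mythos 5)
Your reduction to Theorem \ref{ThmEmbHerSubAlg} is the right first move, and it is exactly what the paper does: apply that theorem once, to $X$ itself, to get a coarse embedding $f\colon X\to Y$; then any sparse $X'\subset X$ coarsely embeds into $Y$ simply by restricting $f$. Your detour through the corners $\chi_{X'}A\chi_{X'}$ is therefore unnecessary, and it is also not quite justified as written: for an arbitrary Roe-like subalgebra $A$ there is no reason that $\chi_{X'}A\chi_{X'}\subset A$, so ``precomposing with the inclusion'' and restricting $\Phi$ to this corner is not available in general (it is fine for $A=\cstr(X),\mathrm{UC}^*(X),\csts(X)$, but the statement allows any Roe-like $A$).

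The genuine gap is the step you yourself flag as the real work: transferring ``only compact ghost projections'' from sparse subspaces of $Y$ back to a sparse $X'\subset X$ along the coarse embedding. The paper does not prove this; it cites \cite[Theorem 7.6 and Remark 7.8]{BragaFarahVignati2019}, which say precisely that this property passes through coarse embeddings. Your proposed mechanism for it --- upgrading the coarse embedding of the sparse space $X'$ to a \emph{bijective} coarse equivalence onto a sparse subspace of $Y$ ``by spreading out'' --- is false in general: take $X'=\bigcup_n\{10^n,10^n+1\}\subset\mathbb{R}$ and $Y=\{10^n\mid n\in\N\}$ with $f(10^n)=f(10^n+1)=10^n$; this is a coarse embedding of a sparse u.l.f.\ space, but no injective map at bounded distance from $f$ exists, since near $10^n$ the space $Y$ has only one point. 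What does work for Roe algebras (as opposed to uniform Roe algebras) is the non-bijective version: $f(X')$ is a sparse subspace of $Y$, $X'\to f(X')$ is a coarse equivalence with uniformly finite fibres, and one can build a unitary $\ell_2(X',H_X)\to\ell_2(f(X'),H_Y)$ collapsing each fibre into the infinite-dimensional fibre Hilbert space; conjugation by it is an isomorphism $\cstr(X')\cong\cstr(f(X'))$ that preserves both compactness and ghostness (the $(z,w)$-entry of the image is a block of uniformly bounded size built from entries of the original operator). Either this argument or the citation must be supplied; as it stands, your proof is incomplete at exactly this point, and the specific upgrading claim you lean on is wrong.
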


\begin{proof}
By Theorem \ref{ThmEmbHerSubAlg}, $X$ coarsely embeds into $Y$. Hence, since the property ``all sparse subspaces yield only compact ghost projections'' passes through  coarse embeddings (see \cite[Theorem 7.6 and Remark 7.8]{BragaFarahVignati2019}), the result follows.
\end{proof}

\begin{proof}[Proof of Theorem \ref{ThmMainEquivalences}]
Since two unital \cstar-algebras algebras $A$ and $B$ are
Morita equivalent if and only if they are stably $*$-isomorphic \cite[Theorem 1.2]{BGR}, \eqref{ItemThmMainEquivalences.Morita} and \eqref{ItemThmMainEquivalences.StIso} are equivalent. It was shown in \cite[Theorem 4]{BrodzkiNibloWright2007} that \eqref{ItemThmMainEquivalences.CE} implies \eqref{ItemThmMainEquivalences.Morita} even without the assumption on sparse subspaces. Moreover, it is clear from the proof of \cite[Theorem 4]{BrodzkiNibloWright2007} that \eqref{ItemThmMainEquivalences.CE} also implies \eqref{ItemThmMainEquivalences.StIsoR}, \eqref{ItemThmMainEquivalences.UCR}, and \eqref{ItemThmMainEquivalences.RoeR}.

Suppose $\Phi:\cstr(X)\to\cstr(Y)$ is a rigid $*$-isomorphism. Let $x\in X\mapsto(y_x,v_x)\in Y\times B_{H_Y}$ and $y\in Y\mapsto(x_y,u_y)\in X\times B_{H_X}$ be assignments witnessing the rigidity of $\Phi$ and $\Phi^{-1}$, respectively. Define maps $f:X\to Y$ and $g:Y\to X$ by letting $f(x)=y_x$ and $g(y)=x_y$ for all $x\in X$ and all $y\in Y$. By Lemma \ref{LemmaTheMapsAreCoarse}, both $f$ and $g$ are coarse.

By \cite[Lemma 3.1]{SpakulaWillett2013AdvMath}, there exists a unitary $U:\ell_2(X,H_X)\to \ell_2(Y,H_Y)$ such that $\Phi(a)=UaU^*$ for all $a\in \cstr(X)$ (cf. Lemma \ref{LemmaPhiStronglyContAndU}). Proceeding exactly as in the proof of \cite[Theorem 4.1]{SpakulaWillett2013AdvMath}, we have that $f\circ g$ and $g\circ f$ are close to $\mathrm{Id}_Y$ and $\mathrm{Id}_X$, respectively. So $X$ is coarsely equivalent to $Y$, and \eqref{ItemThmMainEquivalences.RoeR} implies \eqref{ItemThmMainEquivalences.CE}. The implication  \eqref{ItemThmMainEquivalences.UCR}$\Rightarrow$\eqref{ItemThmMainEquivalences.CE} follows completely analogously, and the implication \eqref{ItemThmMainEquivalences.StIsoR}$\Rightarrow$\eqref{ItemThmMainEquivalences.CE} is the same but with \cite[Theorem 6.1]{SpakulaWillett2013AdvMath} instead of \cite[Theorem 4.1]{SpakulaWillett2013AdvMath}.

Assume that all sparse subspaces of $Y$ yield only compact ghost projections in $\cstr(Y)$. By Corollary \ref{Cor}, the same holds for $X$. Hence,  it follows from Lemma \ref{LemmaPhiStronglyContAndU} and Corollary \ref{CorHomoRigid} that \eqref{ItemThmMainEquivalences.StIso} implies \eqref{ItemThmMainEquivalences.StIsoR}, that \eqref{ItemThmMainEquivalences.UC} implies \eqref{ItemThmMainEquivalences.UCR}, and that \eqref{ItemThmMainEquivalences.Roe} implies \eqref{ItemThmMainEquivalences.RoeR}, completing the proof.
\end{proof}

We finish this section with a simple remark:
\begin{remark}
Let $X$ be a u.l.f. metric space and $A\subset \cstr(X)$ be a Roe-like \cstar-algebra. We say that \emph{$X$ yields only compact ghost projections in $A$} if all ghost projections in $A$ are compact. Proceeding as in the results above, one can obtain the following:
\begin{enumerate}
\item   Items 1-7 of Theorem \ref{ThmMainEquivalences} are all equivalent under the weaker assumption that  all sparse subspaces $Y'\subset Y$ yield only compact ghost projections in $\mathrm{UC}^*(Y')$, and 
\item  Items 1-6 of Theorem \ref{ThmMainEquivalences} are all equivalent under the weaker assumption that  all sparse subspaces $Y'\subset Y$ yield only compact ghost projections in $\csts(Y')$.
\end{enumerate}
\end{remark}

\section{The coarse Baum-Connes conjecture and Roe rigidity}\label{SectionCBC}

In this section, we prove Theorem \ref{ThmCBCwithCoefImplyMainGeomProp} and Theorem \ref{ThmPropWhichImplyMainGeomProp}, which are  consequences of Theorem \ref{ThmMainEquivalences} and  Theorem \ref{geometric condition} below.

In order to be able to evoke some results in the literature,  we must recall the definition of a metric space with only finite coarse components. Let $(X,d)$ be a u.l.f. metric space and $R>0$. The \emph{Rips complex of $X$ associated to $R$} is defined as 
\[P_R(X)=\{A\subset X\mid \mathrm{diam}(A)\leq R\}\]
and we define an equivalence relation $\sim_R$ on $P_R(X)$ by setting $A\sim_R A'$ if there exists $n\in\N$ and $x_1,\ldots, x_n\in X$ so that $x_1\in A$, $x_n\in A'$ and $d(x_i,x_{i+1})\leq R$ for all $i\in \{1,\ldots, n-1\}$. We say that $X$ has \emph{only finite coarse components} if for all $R>0$ every $\sim_R$-equivalence class of $P_R(X)$ is finite.

As we see below, a u.l.f.  metric space with only finite coarse components is simply a sparse metric space in disguise. 

\begin{proposition}\label{PropSparseIFFOnlyFinCoarComp}
A u.l.f.  metric space has only finite coarse components if and only if it is sparse.
\end{proposition}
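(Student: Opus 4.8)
The plan is to prove both implications directly from the definitions, exploiting the fact that uniform local finiteness forces the relevant sets to be finite for cheap reasons, so the content is entirely about converting ``metric separation to infinity'' into ``combinatorial connectivity is bounded'' and back. First I would set up notation: fix a u.l.f. metric space $(X,d)$ and, for each $R>0$, write $C^R_x$ for the $\sim_R$-equivalence class of the singleton $\{x\}$ in $P_R(X)$; equivalently, $C^R_x$ is the set of points reachable from $x$ by an $R$-chain. Since $X$ is u.l.f., each ball of radius $R$ around a point has size at most some $N_R<\infty$, so every $\sim_R$-class being finite is the same as saying: for each $R$ there is a bound $K_R$ on the diameter (or cardinality) of these chain-components. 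I will freely pass between ``$\sim_R$-classes of $P_R(X)$ are finite'' and ``$\sim_R$-classes of the points of $X$ are finite'' since a set $A\in P_R(X)$ of diameter $\le R$ is $\sim_R$-equivalent to any of its points, and conversely a finite point-class spawns only finitely many subsets of diameter $\le R$.

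For the direction \emph{sparse $\Rightarrow$ only finite coarse components}: write $X=\bigsqcup_n X_n$ with $|X_n|<\infty$ and $d(X_n,X_m)\to\infty$ as $n+m\to\infty$. Fix $R>0$. The key observation is that, by the divergence condition, there is a finite set $E_R\subset\N$ such that $d(X_n,X_m)>R$ whenever $\{n,m\}\not\subset E_R$ and $n\ne m$; hence an $R$-chain starting in $X_n$ with $n\notin E_R$ can never leave $X_n$, and an $R$-chain starting in $\bigcup_{n\in E_R}X_n$ stays inside that (finite) set. Therefore every $\sim_R$-class is contained either in a single $X_n$ or in $\bigcup_{n\in E_R}X_n$, and in both cases it is finite. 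This gives only finite coarse components.

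For the converse \emph{only finite coarse components $\Rightarrow$ sparse}: fix a basepoint-free enumeration and build the partition greedily. Let $X_0=C^1_{x}$ for some $x$; having chosen finite sets $X_0,\dots,X_{k-1}$ whose union is $U_{k-1}$, pick any point $x_k\in X\setminus U_{k-1}$ (if one exists) and let $X_k$ be the $\sim_{k+1}$-class of $x_k$ inside $X\setminus U_{k-1}$ — or, more cleanly, take $X_k = C^{k+1}_{x_k}$ intersected with what remains; by hypothesis this is finite. One must be a little careful that this genuinely exhausts $X$ and that the separation condition holds; the cleanest route is: enumerate $X=\{x_1,x_2,\dots\}$ (WLOG $X$ is countable, since u.l.f.\ plus finite coarse components forces countability — each component is finite and there are at most countably many, as distinct components of $P_1(X)$ are disjoint nonempty), and at stage $k$ let $X_k$ be the full $\sim_k$-class of the least-indexed not-yet-covered point, \emph{union} all not-yet-covered $\sim_k$-classes that are ``close'' to already-covered material up to distance $k$ — but since u.l.f.\ makes the $k$-neighborhood of $U_{k-1}$ finite, absorbing it keeps $X_k$ finite. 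The point of absorbing the $k$-neighborhood is exactly to force $d(X_n,X_m)\to\infty$: if $n<m$ and $m$ is large, then $X_n\subset U_{m-1}$, and every point within distance $m$ of $U_{m-1}$ was put into some $X_j$ with $j\le m$, so a point of $X_m$ is at distance $>m$ from $U_{m-1}\supseteq X_n$; letting $n+m\to\infty$ forces the separation to infinity. Finally each $X_k$ is finite (a finite union of finite $\sim_k$-classes over a finite index set), so $X=\bigsqcup_k X_k$ witnesses sparseness.

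The main obstacle is the bookkeeping in the converse: one has to interleave ``cover the next uncovered point'' with ``absorb the current finite neighborhood of what is already covered'' so that simultaneously (a) the construction terminates in exhausting $X$, (b) each piece stays finite, and (c) the separation-to-infinity estimate comes out. All three hinge on u.l.f.\ (to keep neighborhoods finite) and on the finiteness of coarse components (to keep the $\sim_k$-classes finite), so no real analysis is needed — it is a matter of phrasing the greedy construction so these three bullet points are transparent. I expect the ``sparse $\Rightarrow$ finite coarse components'' direction to be essentially immediate and the converse to take a short paragraph of careful indexing.
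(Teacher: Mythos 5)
Your proposal follows essentially the same route as the paper: the forward direction is the easy one (you spell it out via the finite exceptional set $E_R$, which is correct; the paper dismisses it as clear), and the converse is a greedy induction in which the $k$-th piece comes from the $\sim_k$-class of the least-indexed uncovered point, with finiteness of classes keeping each piece finite and exhaustion coming from always covering the least uncovered index. Your variation is to absorb, at stage $k$, all uncovered $\sim_k$-classes meeting the $k$-neighborhood of $U_{k-1}$. The paper instead takes just the one class intersected with the remainder; that simpler construction only yields $d(X_n,\bigcup_{m>n}X_m)>n$, and the pairs where one index stays small are then handled (implicitly, under ``clearly shows'') by disjointness of the pieces together with local finiteness of $X$. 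Your absorption is a legitimate way to avoid that extra appeal, so the difference is a matter of where the work is placed rather than a different method.

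However, your verification of the separation contains a flawed inference: from ``every point within distance $m$ of $U_{m-1}$ was put into some $X_j$ with $j\le m$'' you conclude ``a point of $X_m$ is at distance $>m$ from $U_{m-1}$.'' That is false as stated --- the points absorbed into $X_m$ are precisely those whose $\sim_m$-class comes within distance $m$ of $U_{m-1}$, so they may be very close to $U_{m-1}$; in particular your argument, read literally, says nothing about adjacent pairs $(X_{m-1},X_m)$. What is true, and suffices, is that any point still uncovered \emph{after} stage $m$ is at distance $>m$ both from $U_{m-1}$ (directly from the absorption rule) and from $X_m$ itself: a point within distance $m$ of $X_m$ is $\sim_m$-equivalent to a point of $X_m$, hence lies in a class you absorbed, so if uncovered it would have been placed in $X_m$. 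This uses exactly the whole-class absorption you built in, and gives $d(X_n,X_{m'})>m'-1$ for all $n<m'$, hence sparseness; so the gap is a repairable misstatement rather than a failure of the approach. Two stray asides should be dropped or corrected: finiteness of every $\sim_R$-class is not equivalent to a uniform bound $K_R$ on their sizes (you never use this), and countability of $X$ follows already from uniform local finiteness ($X=\bigcup_n B(x_0,n)$ with all balls finite), whereas ``finitely many points per component and countably many components'' is not by itself a justification.
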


\begin{proof}
Clearly, every sparse metric space has only finite coarse components. Let $X=\{x_n\mid n\in\N\}$ be a countable metric space with metric $d$ and assume that $X$ has only finite coarse components. We construct the partition $(X_n)_n$ of $X$ which witnesses that $X$ is sparse by induction. Let $X_1$ be the union of the elements in the  $\sim_1$-equivalence class of $P_1(X)$ containing $\{x_1\}$. By hypothesis, $X_1$ is finite and $d(X_1,X\setminus X_1)>1$. Suppose $X_1,\ldots, X_{n}$ have been defined, that $X_i$ is finite for all $i\in \{1,\ldots,n\}$ and that $d(X_n,X' )>n$, where $X'=X\setminus \bigcup_{i\leq n}X_i$. Let $X''$ be the union of the elements in the $\sim_{n+1}$-equivalence class of $P_{n+1}(X)$ containing $\{x_{m}\}$, where $m=\min\{i\in\N\mid x_i\not\in \bigcup_{i\leq n}X_i\}$.  Set $X_{n+1}=X''\cap X' $. Then $X_{n+1}$ is finite and $d(X_{n+1},X' \setminus X_{n+1})>n+1$. This procedure clearly shows that $X$ is sparse.
\end{proof}

The following theorem is proved in the same way as a result of Martin Finn-Sell in \cite[Proposition~35]{FinnSell2014}:
\begin{theorem} \label{ThmBdryInj}
Let $X$ be a sparse uniformly discrete u.l.f. metric space. Assume that the boundary coarse Baum-Connes assembly map for $X$ is injective. If $[p]_0\in K_0(\cstr(X))$ is the class of a noncompact ghost projection in the Roe algebra $\cstr(X)$, then $[p]_0$ is not in the image of the coarse Baum-Connes assembly map for $X$.

\end{theorem}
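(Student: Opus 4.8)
The plan is to follow the structure of Finn-Sell's argument, which exploits the six-term commutative diagram relating the coarse assembly map, the boundary assembly map, and a connecting map coming from the short exact sequence of groupoids / algebras. First I would set up the relevant Mayer-Vietoris-type diagram. For a sparse uniformly discrete u.l.f. space $X$, the coarse groupoid $G(X)$ sits in a decomposition $G(X) = X \times X \sqcup G(X)|_{\partial X}$ where $X\times X$ is the pair groupoid on the (open, dense) unit subspace $X \subset \beta X$ and $G(X)|_{\partial X}$ is the (closed) boundary groupoid on $\partial X = \beta X \setminus X$. Correspondingly one gets a short exact sequence of $C^*$-algebras
\[
0 \to \mathcal{K}(\ell_2(X)) \otimes \mathcal{K} \to \cstr(X) \to \cstr(X)/(\mathcal{K}\otimes\mathcal{K}) \to 0,
\]
and the quotient is the reduced crossed product groupoid algebra of the boundary groupoid with the appropriate coefficients. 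This sequence, together with the analogous exact sequence at the level of the topological $K$-theory of the groupoids (i.e. $K^{top}_*(X\times X) \to K^{top}_*(G(X)) \to K^{top}_*(G(X)|_{\partial X})$), fits into a ladder whose vertical maps are the assembly maps $\mu$, $\mu_{\partial}$, and the one for the pair groupoid, which is an isomorphism since $X \times X$ is (equivalent to) a proper groupoid / since $\mathcal{K}\otimes\mathcal{K}$ has trivial coarse $K$-homology except in the expected degree.

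Next I would run a diagram chase. Suppose $[p]_0 \in K_0(\cstr(X))$ is the class of a noncompact ghost projection and suppose toward a contradiction that $[p]_0 = \mu([z])$ lies in the image of the coarse assembly map. Push $[p]_0$ forward to $K_0$ of the boundary quotient $\cstr(X)/(\mathcal{K}\otimes\mathcal{K})$: because $p$ is a ghost, its image in the quotient is the image of a ghost, and the key input (this is the ghost-specific part, exactly as in Finn-Sell's Proposition~35 and in \cite{BragaFarah2018}) is that the image of a ghost projection in the boundary algebra is zero — ghosts map into $c_0(X)\otimes\mathcal{K}$ "at the boundary" in the sense that their symbol vanishes, so $[q\bar p]_0 = 0$ where $q$ is the quotient map. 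On the other hand, the commutativity of the ladder says $q_*\mu([z]) = \mu_{\partial}(\partial_{\mathrm{top}}[z])$ where $\partial_{\mathrm{top}}$ is the relevant boundary map in coarse $K$-homology. Combining, $\mu_{\partial}(\partial_{\mathrm{top}}[z]) = 0$, and since the boundary assembly map $\mu_{\partial}$ is injective by hypothesis, $\partial_{\mathrm{top}}[z] = 0$. By exactness of the top row, $[z]$ lifts to a class $[z'] \in K^{top}_*(X \times X)$, i.e. to the coarse $K$-homology of the "interior"; applying the pair-groupoid assembly isomorphism and chasing back, we find that $[p]_0$ comes from $K_0(\mathcal{K}(\ell_2 X)\otimes\mathcal{K})$, i.e. $[p]_0$ is the class of a compact projection — equivalently $p$ is equivalent to a compact projection in $\cstr(X)$, which (since $p$ is itself a projection and the embedding of compacts is an isomorphism onto its image in $K$-theory) forces $p$ to have finite rank. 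This contradicts that $p$ is noncompact, completing the proof.

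There are a couple of points that need care, and the main obstacle is making the ghost-vanishing step and the identification of the boundary quotient precise. Concretely: (i) one must identify $\cstr(X)/(\mathcal{K}(\ell_2 X)\otimes\mathcal{K})$ with the reduced crossed-product $C^*$-algebra of $G(X)|_{\partial X}$ with coefficients in $\ell_\infty(X,\mathcal{K})/c_0(X,\mathcal{K})$, so that the boundary coarse Baum-Connes assembly map of the hypothesis is literally the vertical map in the ladder; this is standard groupoid $C^*$-algebra bookkeeping (Skandalis-Tu-Yu) but needs to be invoked correctly with the stabilization $\mathcal{K}$ present because we are dealing with $\cstr$ and not $\cstu$. (ii) One must verify that the class of a ghost projection dies in $K_0$ of that boundary quotient — this is where sparseness is used, via the structure theory of ghost operators on sparse spaces and the fact that, up to the ideal, a ghost is a limit of operators supported on finite pieces, so its symbol at the boundary is $0$; I would cite \cite[Theorem 7.6]{BragaFarahVignati2019} / the relevant lemma of \cite{BragaFarah2018} and Finn-Sell's argument rather than redo it. (iii) One should double-check the naturality of the assembly maps with respect to the groupoid inclusion $G(X)|_{\partial X} \hookrightarrow G(X)$ and the open inclusion $X\times X \hookrightarrow G(X)$ — this naturality is exactly what makes the ladder commute, and it is available from \cite{Tu99moy} and the functoriality of the Baum-Connes assembly map. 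Once these three ingredients are in place, the diagram chase above is routine, and I would present it as such.
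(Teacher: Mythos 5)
Your overall route is the same one the paper relies on: the paper's proof of Theorem \ref{ThmBdryInj} consists of observing (Proposition \ref{PropSparseIFFOnlyFinCoarComp}) that a sparse space has only finite coarse components and then invoking \cite[Theorem~4.6]{MR3197659} (cf.\ \cite[Proposition~35]{FinnSell2014}), whose proof is exactly the ladder-and-diagram-chase you outline. Two of your steps, however, do not hold as written, and one of them is a genuine gap. First, the identification in your point (i) of $\cstr(X)/\cK(\ell_2(X,H_X))$ with $(\ell_\infty(X,\cK)/c_0(X,\cK))\rtimes_r G(X)|_{\beta X\setminus X}$ is false precisely in the situation of the theorem: the kernel of the canonical surjection $\cstr(X)\to(\ell_\infty(X,\cK)/c_0(X,\cK))\rtimes_r G(X)|_{\beta X\setminus X}$ is the ghost ideal, which strictly contains the compacts whenever a noncompact ghost projection exists -- which is your standing hypothesis. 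This is repairable, since your chase only uses commutativity of the two squares and exactness of the top row, so you may take that surjection itself as the right-hand vertical edge; but the isomorphism should not be asserted. (Relatedly, the vanishing of a ghost in the boundary algebra is a general feature of the ghost ideal for u.l.f.\ spaces and is not where sparseness enters; \cite[Theorem 7.6]{BragaFarahVignati2019} is not the relevant reference for it.)

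The genuine gap is the final step. From ``$[p]_0$ lies in the image of $K_0(\cK(\ell_2(X,H_X)))\to K_0(\cstr(X))$'', i.e.\ $[p]_0=n[e]_0$ for a rank-one projection $e$, you conclude that $p$ is equivalent to a compact projection and hence has finite rank. Equality of $K_0$-classes only yields stable equivalence, not Murray--von Neumann equivalence, and Roe algebras are not known to satisfy the cancellation needed to upgrade it; excluding $[p]_0=n[e]_0$ for a noncompact ghost projection is exactly the nontrivial remaining point, and it is here that sparseness must be used a second time. In the cited proofs this is where the decomposition $X=\bigsqcup_n X_n$ into finite pieces does real work (for instance, block compression gives a $*$-homomorphism into $\prod_n\cK(\ell_2(X_n,H_X))/\bigoplus_n\cK(\ell_2(X_n,H_X))$ annihilating the compacts, and one shows it does not annihilate the class of a noncompact ghost projection). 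Your proposal neither proves nor cites a statement of this kind, so as written the contradiction at the end does not follow.
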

\begin{proof}
By Proposition \ref{PropSparseIFFOnlyFinCoarComp}, $X$ has only finite coarse components.  The proof is now  identical to the proof of \cite[Theorem~4.6]{MR3197659} (see also the proof of \cite[Proposition~35]{FinnSell2014}). 
\end{proof}

%{\color{red} For the next theorem, we need to recall the notion of property (H) for Banach spaces.
%\begin{definition}\cite{MR2980001}
%A real Banach space $V$ is said to have \emph{property (H)} if there exists an increasing sequence of finite dimensional subspaces $\{V_n\}$ of $V$, and an increasing sequence of finite dimensional subspaces $\{W_n\}$ of a real Hilbert space such that
%\begin{enumerate}
%\item $\bigcup_nV_n$ is dense in $V$;
%\item there exists a uniformly continuous map $\psi:S(\bigcup_n V_n)\rightarrow S(\bigcup_n W_n)$ such that the restriction of $\psi$ to $S(V_n)$ is a homeomorphism onto $S(W_n)$ for each $n\in\NN$, where $S(\cdot)$ denotes the unit spheres of the respective spaces.
%\end{enumerate}
%\end{definition}

%This property was introduced in \cite{MR2980001}, where it was shown that groups that are coarsely embeddable into Banach spaces with property (H) satisfy the strong Novikov conjecture.
%Moreover, in \cite{ChenWangYu2015}, it was shown that u.l.f. metric spaces that are coarsely embeddable into Banach spaces with property (H) satisfy the coarse Novikov conjecture.
%Examples of Banach spaces with property (H) include $\ell_p(\NN)$ for $p\geq 1$, the space of Schatten $p$-class operators on a Hilbert space for $p\geq 1$, and uniformly convex Banach spaces with certain unconditional bases.}

\begin{theorem}\label{geometric condition}
Let $X$ be a uniformly discrete u.l.f. metric space. Then all sparse subspaces of $X$ yield only compact ghost projections in their Roe algebras if any of the following conditions holds: 
\begin{enumerate}
\item $X$ satisfies the coarse Baum-Connes conjecture with coefficients.

\item  $X$ is sparse, admits a fibred coarse embedding into a Hilbert space\footnote{See \cite[Definition~2.1]{MR3116568} for the definition of fibred embedding into Hilbert spaces.},  and satisfies the coarse Baum-Connes conjecture.

\item  $X=\Box \Gamma$ is any box space\footnote{See \cite[Definition 11.24]{RoeBook} for the definition of box spaces.} of a residually finite, finitely generated discrete group $\Gamma$ that admits a coarse embedding into a Banach space with property (H)\footnote{See \cite[Definition~1.1]{MR2980001} for the definition of property (H). Examples of Banach spaces with property (H) include $\ell_p(\NN)$ for $p\geq 1$, the Banach space of Schatten $p$-class operators on a Hilbert space for $p\geq 1$, and Banach spaces with nontrivial cotype admiting an unconditional bases (see \cite{{MR2980001}} and \cite{ChengWang2018} for more details).}, and $X$ satisfies the coarse Baum-Connes conjecture.

\item  $X=\Gamma$ is a countable discrete group which satisfies the   Baum-Connes conjecture with coefficients.
\end{enumerate}
\end{theorem}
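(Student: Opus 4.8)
The plan is to reduce the statement to Theorem~\ref{ThmBdryInj}, fed with the heredity properties of the Baum-Connes conjectures collected in Theorem~\ref{BCconjectures}. The organizing principle is the following: \emph{if $Z$ is a sparse, uniformly discrete, u.l.f.\ metric space whose boundary coarse Baum-Connes assembly map is injective and whose coarse Baum-Connes assembly map is surjective, then $\cstr(Z)$ contains no noncompact ghost projection}. Indeed, a noncompact ghost projection $p\in\cstr(Z)$ would, by injectivity of the boundary assembly map and Theorem~\ref{ThmBdryInj}, have class $[p]_0\in K_0(\cstr(Z))$ outside the image of the coarse assembly map for $Z$, contradicting surjectivity of the latter. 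So for every sparse subspace $X'\subseteq X$ (which is uniformly discrete and u.l.f., being a subspace of $X$) it suffices to exhibit a sparse space $Z$ satisfying these two assembly-map conditions together with a $*$-embedding $\cstr(X')\hookrightarrow\cstr(Z)$ that carries noncompact ghost projections to noncompact ghost projections. Moreover, case (4) reduces to case (1): by Theorem~\ref{BCconjectures}(5), if $\Gamma$ satisfies the Baum-Connes conjecture with coefficients then $X=|\Gamma|$ satisfies the coarse Baum-Connes conjecture with coefficients.

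In case (1), take $Z=X'$. By Theorem~\ref{BCconjectures}(1), $X'$ still satisfies the coarse Baum-Connes conjecture with coefficients, i.e.\ $G(X')$ satisfies the Baum-Connes conjecture with coefficients; specializing to the coefficient $\ell_\infty(X',\mathcal K(\ell_2(\N)))$ shows the coarse assembly map for $X'$ is an isomorphism, hence surjective. For the boundary, use \cite[Lemma 3.3]{SkandalisTuYu2002} to write $G(X')=\beta X'\rtimes G'$ with $G'$ second countable and étale; the boundary groupoid $G(X')|_{\beta X'\setminus X'}$ is a closed étale subgroupoid, so Theorem~\ref{BCconjectures}(2) gives that it, too, satisfies the Baum-Connes conjecture with coefficients, and specializing to $\ell_\infty(X',\mathcal K)/c_0(X',\mathcal K)$ shows $X'$ satisfies the boundary coarse Baum-Connes conjecture; in particular the boundary assembly map for $X'$ is injective. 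The organizing principle with $Z=X'$ now settles cases (1) and (4).

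In cases (2) and (3), take $Z=X$ itself, which here is sparse, uniformly discrete and u.l.f. In both cases $X$ admits a fibred coarse embedding into Hilbert space: in (2) by hypothesis, and in (3) because a box space $\Box\Gamma$ of a group $\Gamma$ that coarsely embeds into a Banach space with property (H) admits such a fibred coarse embedding (\cite{MR2980001, ChengWang2018}). By Theorem~\ref{BCconjectures}(3) the boundary groupoid of $X$ is then a-T-menable, so $X$ satisfies the boundary coarse Baum-Connes conjecture and its boundary assembly map is injective; surjectivity of the coarse assembly map for $X$ is precisely the standing hypothesis that $X$ satisfies the coarse Baum-Connes conjecture. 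Hence, by the organizing principle, $\cstr(X)$ has no noncompact ghost projection. Finally, with the harmless choice $H_{X'}=H_X$, extending operators by zero outside $\ell_2(X',H_X)\subseteq\ell_2(X,H_X)$ defines a $*$-embedding $\cstr(X')\hookrightarrow\cstr(X)$; since the metric of $X'$ is the restriction of that of $X$, it sends projections to projections, preserves noncompactness, and sends ghost operators to ghost operators. So a noncompact ghost projection in $\cstr(X')$ would yield one in $\cstr(X)$, which is impossible.

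I expect the routine parts to be the heredity bookkeeping in case (1) and the compression argument in cases (2) and (3). The external ingredient I would want to pin down is the one used in case (3)---that box spaces of groups coarsely embeddable into Banach spaces with property (H) admit fibred coarse embeddings into Hilbert space---whose precise statement and reference I would locate in the property-(H) literature. The substantive mathematics is already packaged inside Theorem~\ref{ThmBdryInj} (built on \cite{MR3197659, FinnSell2014}): the fact that once the boundary assembly map is injective, a noncompact ghost projection cannot lie in the image of the coarse assembly map. Everything else is arranging heredity statements around that core fact.
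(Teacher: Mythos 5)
Your treatment of cases (1), (2) and (4) is correct and essentially the paper's own argument: heredity of the coarse Baum--Connes conjecture with coefficients under passing to subspaces (Theorem~\ref{BCconjectures}(1)), the subgroupoid theorem applied to $G(X')|_{\beta X'\setminus X'}$ via \cite[Lemma~3.3]{SkandalisTuYu2002}, the reduction of (4) to (1), and in (2) the a-T-menability of the boundary groupoid coming from the fibred coarse embedding, all fed into Theorem~\ref{ThmBdryInj} together with surjectivity of the coarse assembly map. Your explicit corner argument --- identifying $\cstr(X')$ with $\chi_{X'}\cstr(X)\chi_{X'}$ so that noncompact ghost projections extend by zero to noncompact ghost projections --- is the standard way to pass to sparse subspaces in (2) and (3), and making it explicit is if anything more careful than the paper.

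The genuine gap is the external ingredient you lean on in case (3): it is not true that a box space $\Box\Gamma$ of a group coarsely embeddable into a Banach space with property (H) admits a fibred coarse embedding into Hilbert space. Fibred coarse embeddability of a box space into Hilbert space is equivalent (by the theorem of Chen--Wang--Wang) to the Haagerup property of $\Gamma$, and coarse embeddability into a property (H) Banach space is far weaker: Hilbert space itself has property (H), and there are residually finite groups with property (T) (e.g.\ cocompact lattices in $Sp(n,1)$), hence without the Haagerup property, which nevertheless coarsely embed into Hilbert space because they are exact; their box spaces are expanders and admit no fibred coarse embedding into Hilbert space. So in case (3) you cannot obtain injectivity of the boundary assembly map through a-T-menability of the boundary groupoid. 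The paper's route is different: by \cite[Proposition~2.5]{MR3197659} one has $G(X)|_{\beta X\setminus X}\cong(\beta X\setminus X)\rtimes\Gamma$, so by \cite[Lemma~4.1]{SkandalisTuYu2002} injectivity of the boundary assembly map is equivalent to injectivity of the $\Gamma$-equivariant assembly map with coefficients in $\ell_\infty(X,\mathcal{K})/c_0(X,\mathcal{K})$, and the latter is exactly what Kasparov--Yu's theorem \cite[Theorem~1.2]{MR2980001} provides for groups coarsely embeddable into a Banach space with property (H). With boundary injectivity supplied this way, the remainder of your case (3) (sparseness of $\Box\Gamma$, surjectivity from the assumed coarse Baum--Connes conjecture, Theorem~\ref{ThmBdryInj}, and the corner argument for sparse subspaces) goes through.
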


\begin{proof}
(1): 
Assume that $X$ satisfies the coarse Baum-Connes conjecture with coefficients.
By Theorem \ref{BCconjectures}\eqref{ItemBCconjectures.subsp}, every sparse subspace $\tilde{X}\subset X$ also satisfies the coarse Baum-Connes conjecture with coefficients, i.e., the coarse groupoid $G(\tilde{X})$ of $\tilde{X}$ satisfies the   Baum-Connes conjecture with coefficients.

  By \cite[Lemma 3.3]{SkandalisTuYu2002}, $G(\tilde{X})=\beta\tilde{X}\rtimes G'$ for some locally compact, second countable, \'{e}tale groupoid $G'$, so by Theorem \ref{BCconjectures}\eqref{ItemBCconjectures.subgpoid}, the closed \'{e}tale subgroupoid $G(\tilde{X})|_{\beta \tilde{X}\setminus \tilde{X}}$ satisfies the   Baum-Connes conjecture with coefficients. By definition, this implies that  $\tilde{X}$ satisfies the boundary coarse Baum-Connes conjecture. By Theorem~\ref{ThmBdryInj}, there cannot be a noncompact ghost projection in $\cstr(\tilde{X})$.

(2):
Since $X$ is sparse and admits a fibred coarse embedding into a Hilbert space, $X$ satisfies the boundary coarse Baum-Connes conjecture by Theorem \ref{BCconjectures}\eqref{ItemBCconjectures.FCE}. Since $X$ also satisfies the coarse Baum-Connes conjecture by assumption, there cannot be a noncompact ghost projection in $\cstr(X)$ by Theorem \ref{ThmBdryInj}.

(3):
It follows from \cite[Proposition~2.5]{MR3197659} that $G(X)|_{\beta X\backslash X}\cong (\beta X\backslash X)\rtimes \Gamma$. Hence, the boundary coarse Baum-Connes assembly map for $X$
\begin{align*}
K^{\text{top}}_*\left(G(X)|_{\beta X\backslash X},\frac{\ell_\infty(X,\mathcal{K})}{c_0(X,\mathcal{K})}\right)\rightarrow K_*\left(\frac{\ell_\infty(X,\mathcal{K})}{c_0(X,\mathcal{K})}\rtimes_r G(X)|_{\beta X\backslash X}\right)
\end{align*}
is injective if and only if the  Baum-Connes assembly map

\begin{align*}
K^{\text{top}}_*\left(\Gamma, \frac{\ell_\infty(X,\mathcal{K})}{c_0(X,\mathcal{K})}\right)\rightarrow K_*\left(\frac{\ell_\infty(X,\mathcal{K})}{c_0(X,\mathcal{K})}\rtimes_r \Gamma\right)
\end{align*}
is injective by \cite[Lemma~4.1]{SkandalisTuYu2002}. Thus, the conclusion follows directly from \cite[Theorem~1.2]{MR2980001}.

(4):
If $\Gamma$ satisfies the Baum-Connes conjecture with coefficients, then it satisfies the coarse Baum-Connes conjecture with coefficients by Theorem \ref{BCconjectures}\eqref{ItemBCconjectures.gpBC}, reducing to case (1).

\end{proof}

Before we prove Theorem \ref{ThmCBCwithCoefImplyMainGeomProp} and Theorem \ref{ThmPropWhichImplyMainGeomProp}, we need to notice that previous results in the literature already imply that if all sparse subspaces of a u.l.f. metric space yield only compact ghost projections in their Roe algebra, then this space is uniform Roe rigid.

\begin{proposition}\label{PropUnfRoeRig}
Let $X$ be a u.l.f. metric space so that all of its sparse subspaces yield only compact ghost projections in their Roe algebras. Then $X$ is uniform Roe rigid.
\end{proposition}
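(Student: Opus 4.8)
The goal is to show that if all sparse subspaces of a u.l.f.\ metric space $X$ yield only compact ghost projections in their Roe algebras, then $X$ is uniform Roe rigid, i.e.\ whenever $Y$ is u.l.f.\ and $\cstu(X)\cong\cstu(Y)$ as \cstar-algebras, then $X$ and $Y$ are coarsely equivalent. The natural strategy is to reduce this to the already-established machinery of the paper (Corollary~\ref{CorHomoRigid} and Theorem~\ref{ThmMainEquivalences}) by tensoring with $\cK(\ell_2(\NN))$. First I would recall that $\csts(Z)\cong\cstu(Z)\otimes\cK(H_Z)$ canonically for every u.l.f.\ metric space $Z$, as noted just after Definition~\ref{DefiAlgebras}; hence a $*$-isomorphism $\cstu(X)\to\cstu(Y)$ induces a $*$-isomorphism $\csts(X)\cong\cstu(X)\otimes\cK\to\cstu(Y)\otimes\cK\cong\csts(Y)$.

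Next I would argue that this stabilized isomorphism is automatically rigid. The key input is Corollary~\ref{CorHomoRigid}: every strongly continuous compact-preserving $*$-homomorphism from a Roe-like subalgebra of $\cstr(X)$ into $\cstr(Y)$ is rigid, provided all sparse subspaces of $Y$ yield only compact ghost projections. To apply it to $\Psi\colon\csts(X)\to\csts(Y)$, I need (a) that the geometric hypothesis, assumed here for $X$, also holds for $Y$, and (b) that $\Psi$ is strongly continuous and compact-preserving. For (b): $\Psi$ is a $*$-isomorphism of stable algebras, so it carries the unique minimal ideal of compacts $\cK(\ell_2(X,H_X))$ onto $\cK(\ell_2(Y,H_Y))$ (compacts are the only nonzero closed two-sided ideal contained in every hereditary subalgebra, or one can invoke Lemma~\ref{LemmaPhiStronglyContAndU} with $B=\csts(Y)$, which applies since a $*$-isomorphism is an embedding onto the hereditary subalgebra $B$ itself), whence $\Psi$ is compact-preserving, strongly continuous, and implemented by a unitary $U\colon\ell_2(X,H_X)\to\ell_2(Y,H_Y)$. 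For (a): the hypothesis is symmetric in $X$ and $Y$ once we know $\csts(X)\cong\csts(Y)$, by Corollary~\ref{Cor} applied to $\Psi^{-1}$ (an isomorphism is an embedding onto a hereditary subalgebra), so all sparse subspaces of $Y$ also yield only compact ghost projections.

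With rigidity of $\Psi$ and $\Psi^{-1}$ in hand, $\csts(X)$ and $\csts(Y)$ are rigidly $*$-isomorphic, which is exactly condition~\eqref{ItemThmMainEquivalences.StIsoR} of Theorem~\ref{ThmMainEquivalences} (or one can directly invoke the equivalence \eqref{ItemThmMainEquivalences.StIso}$\Leftrightarrow$\eqref{ItemThmMainEquivalences.CE}, valid under the sparse-ghost hypothesis on $Y$, which we have verified). Therefore $X$ is coarsely equivalent to $Y$. Since $Y$ was an arbitrary u.l.f.\ space with $\cstu(X)\cong\cstu(Y)$, this shows $X$ is uniform Roe rigid. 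Alternatively, and perhaps more in the spirit of the remark preceding the proposition that ``previous results in the literature already imply'' this, one can bypass the Roe-algebra machinery entirely: by \cite[Lemma~7.3]{BragaFarah2018} the sparse-ghost condition is implied by — no, it is the other way — so instead one cites \cite[Corollary~1.5]{BragaFarahVignati2019} together with the fact that the sparse-ghost property is precisely the condition isolated there for the uniform Roe rigidity argument to go through; I would state the proof using whichever of these is cleaner given what has been set up.

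**Main obstacle.** The only real subtlety is verifying that the stabilized isomorphism is compact-preserving and strongly continuous without circularity — one must not assume a priori that it comes from a unitary. The clean way is to observe that $\cK(\ell_2(Z,H_Z))$ is the unique smallest nonzero closed ideal of $\csts(Z)$ (equivalently, the intersection of all its nonzero hereditary \cstar-subalgebras, or: it is the unique ideal on which every state is normal), a purely \cstar-algebraic fact, so any $*$-isomorphism must preserve it; then Lemma~\ref{LemmaPhiStronglyContAndU} (applied with $A=\csts(X)$, $B=\csts(Y)$, and $\Phi=\Psi$, which is trivially an embedding onto the hereditary subalgebra $B$) hands us strong continuity, rank-preservation, and the implementing isometry for free. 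After that point everything is a direct citation of Corollary~\ref{CorHomoRigid}, Corollary~\ref{Cor}, and Theorem~\ref{ThmMainEquivalences}.
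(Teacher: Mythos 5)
Your proof is correct and follows essentially the same route as the paper: stabilize the isomorphism to get $\csts(X)\cong\csts(Y)$ and then invoke Theorem~\ref{ThmMainEquivalences}. The extra work you do (transferring the sparse-ghost hypothesis to $Y$ via Corollary~\ref{Cor} and verifying rigidity via Lemma~\ref{LemmaPhiStronglyContAndU} and Corollary~\ref{CorHomoRigid}) merely unpacks steps already contained in the proof of that theorem, which the paper simply cites with the roles of $X$ and $Y$ swapped.
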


\begin{proof}
Let $Y$ be a u.l.f. metric space so that $\cstu(X)$ and $\cstu(Y)$ are $*$-isomorphic. Then   $\cstu(X)\otimes \cK(H_X)$ and $\cstu(Y)\otimes \cK(H_Y)$ are $*$-isomorphic, i.e., $\csts(X)$ and $\csts(Y)$ are $*$-isomorphic. Hence, Theorem \ref{ThmMainEquivalences} gives us that $X$ and $Y$ are coarsely equivalent.
\end{proof}

\begin{proof}[Proof of Theorem \ref{ThmCBCwithCoefImplyMainGeomProp} and Theorem \ref{ThmPropWhichImplyMainGeomProp}]
First notice that we can assume without loss of generality that $X$  is  uniformly discrete. Indeed, let $d$ be the metric on $X$ and set $\partial=d+1$. The metric $\partial $  is clearly a uniformly discrete u.l.f. metric and the identity map $(X,d)\to(X,\partial)$ is a bijective coarse equivalence. Hence, the Roe algebras of $(X,d)$ and $(X,\partial)$ are canonically isomorphic. Moreover, all the properties considered in Theorem \ref{ThmCBCwithCoefImplyMainGeomProp} and Theorem \ref{ThmPropWhichImplyMainGeomProp} are also shared by $(X,\partial)$ given that $(X,d)$ satisfies them.

The results on Roe rigidity now follow straightforwardly from Theorem \ref{ThmMainEquivalences} and Theorem \ref{geometric condition}, and the results on uniform Roe rigidity follow   from Proposition \ref{PropUnfRoeRig} and Theorem \ref{geometric condition}.
\end{proof}

\begin{question}
There is a finitely generated group $\Gamma$ which contains an expander $X$ with large girth isometrically in its Cayley graph (see \cite[Theorem~4]{Osaj14}). Thus $\Gamma$ does not satisfy the coarse Baum-Connes conjecture with coefficients. Moreover, the sparse subspace $X$ of $\Gamma$ yields a noncompact ghost projection in $\cstr(X)$. Is $\Gamma$ Roe rigid?
\end{question}

 We finish this paper defining   uniform Roe  bijective rigidity and listing some related  results, some of which are already known from earlier results.

\begin{definition}
Let $X$ be a u.l.f. metric space. We say that $X$ is \emph{uniform Roe  bijectively rigid} if $X$ is   bijectively coarsely equivalent to any uniformly locally finite metric space $Y$ so that $\cstu(X)$ and $\cstu(Y)$ are $*$-isomorphic.
\end{definition}

 \begin{remark}
Let $X$ be a u.l.f. metric space. Then $X$ is uniform Roe bijectively rigid if any one of the following conditions holds:
\begin{enumerate}
\item  $X$ has property A. 
This case follows from \cite[Corollary~6.13]{WhiteWillett2017} and \cite[Theorem~1.11]{BragaFarahVignati2018}.

\item  $X$ is non-amenable and satisfies the coarse Baum-Connes conjecture with coefficients.
This case follows from Theorem~\ref{geometric condition}(1), Theorem~\ref{ThmMainEquivalences}, and \cite[Theorem~5.1]{WhiteWillett2017}.

\item  $X=\Gamma$ is a  group which satisfies the coarse Baum-Connes conjecture with coefficients. Indeed, if $\Gamma$ is amenable, then $\Gamma$ has property A, so we apply case (1). If $\Gamma$ is non-amenable as a group, then it is non-amenable as a metric space, so we apply case (2).
\end{enumerate}
\end{remark}

\begin{question}
Let $X$ be a u.l.f. metric space, which admits a coarse embedding into a Hilbert space. Is $X$ necessarily uniform Roe   bijectively rigid?
\end{question}

 \begin{acknowledgments}
Part of this paper was written while the first named author visited IMPAN in Warsaw and he is thankful for the warm welcome he received during his stay there. The third named author would like to thank Christian B\"onicke and Martin Finn-Sell for helpful discussions on the coarse Baum-Connes conjecture.
\end{acknowledgments}

\end{document}